\theoremstyle{definition}
\newtheorem{theorem}{Theorem} 
\theoremstyle{definition}
\newtheorem{lemma}{Lemma}
\theoremstyle{definition}
\theoremstyle{definition}
\newtheorem{example}{Example}  
\theoremstyle{definition}
\title{Bifurcations of a Leslie Gower predator prey model with Holling type III functional response and Michaelis-Menten prey harvesting }
\author{Eric \'Avila-Vales\\ \'Angel Estrella-Gonz\'alez and Erika Rivero-Esquivel\\
\begin{small}
Facultad de Matem\'aticas, Universidad Aut\'onoma de Yucat\'an.
\end{small}  \\
\begin{small}
Anillo Perif\'erico Norte, Tablaje 13615, C.P. 97119.
M\'erida, M\'exico.
\end{small}}
\date{\today}
\begin{document}
 \maketitle

\begin{abstract}
We discuss the stability and bifurcation analysis for a predator-prey system with  non-linear Michaelis-Menten prey harvesting. The existence and stability of possible equilibria are investigated. We provide rigorous mathematical proofs for the existence of Hopf and saddle node bifurcations. We prove that the system exhibits Bogdanov-Takens bifurcation of codimension two, calculating the normal form. We provide several numerical simulations to illustrate our theoretical findings.
\end{abstract} 
 
\section{Introduction}

From the point of view of human needs, the exploitation of biological resources
and harvesting of populations are commonly practiced in fishery,
forestry, and wildlife management. Simultaneously, there is a wide range of
interests in the use of bioeconomic models to gain insight into the scientific
management of renewable resources which is related to the optimal management
of renewable resources. It is obvious that a harvesting in preys affects
the population of predators indirectly, because it reduces the food population
available in the area. There are basically three types of harvesting
reported in the literature \cite{gupta2015dynamical}.
\begin{itemize}
\item Constant harvesting, $h(x) = h$, where a constant number of individuals
are harvested per unit of time.
\item Proportional harvesting $h(x) = Ex$.
\item Holling type II harvesting $h(x)= \frac{qEx}{m_1E+m_2x}. $
\end{itemize}
Where x is the population that presents the harvesting (prey or predator).For
example, Gupta et al worked with a model with Holling type II harvesting
in prey in \cite{gupta2013bifurcation} and Holling type II harvesting in predator in \cite{gupta2015dynamical}. \par 
The Leslie-Gower term is a formulation  where predator population has logistic growth \cite{leslie1960properties}:
 $$hY\left( 1- \frac{Y}{\alpha X}\right), \quad \alpha= h/n,$$
 but the carrying $C = \alpha X$ is proportional to prey abundance. The term $Y/\alpha X$
is called the Leslie-Gower term \cite{aziz2003boundedness}. Some authors had added a constant to the denominator of Leslie-Gower term, using $Y /(\alpha X+b)$ to avoid singularities when
$X = 0$. This term is called modified Leslie-Gower term.

In \cite{gupta2013bifurcation} the authors studied the following predator prey model of form:
\begin{align}
\frac{dx_1}{dt} &= rx_1 \left( 1- \frac{x_1}{k} \right)- \frac{a_1x_1x_2}{n_1+x_1} - \frac{qEx_1}{m_1E+m_2x_2}, \nonumber \\
\frac{dx_2}{dt} &= sx_2 \left( 1- \frac{a_2x_2}{n+x_1} \right). \label{ec1}
\end{align}

Where $x_1, x_2$ are the population of prey and predators respectively. The
biological assumptions on model \eqref{ec1} are:
\begin{enumerate}
\item Without predator population, the preys have logistic growth $rx_1 \left( 1- \frac{x_1}{k} \right)$ with $r$ the intrinsic growth rate and k, the carrying capacity of environment.
\item $\frac{a_1 x_1}{n_1+x_1}$ is the functional response of Holling type II, $a_1$ and $n_1$ stand for
the predator capturing rate and half saturation constant respectively.
\item The prey presents nonlinear harvesting.
\item The predator has a modified Leslie Gower growth.
\end{enumerate}

Huang et al in \cite{huang2014bifurcations} proposed a Leslie Gower model with Holling type III functional response given by:
\begin{align*}
\frac{dx}{dt} &= rx \left(1- \frac{x}{k} \right) - \frac{mx^2y}{ax^2+bx+1}, \\
\frac{dy}{dt} &= sy \left( 1 - \frac{y}{hx} \right).
\end{align*}
Where $$ p(x)= \frac{mx^2}{ax^2+bx+1}, $$ 
is the Holling type III functional response. To have a biologically meaningful
interpretation we need $p(x) > 0$ (see \cite{buffoni2016dynamics}), thus $b > -2 \sqrt{a}$
 (then, $ax^2+bx+1 =0$  is positive for all $x\geq 0 $).\par 
Based on the work of \cite{gupta2013bifurcation} and \cite{huang2014bifurcations} we propose a model with same assumptions as model \eqref{ec1}, but with functional response of Holling type III. The
model is:
\begin{align}
\frac{dx_1}{dt} &= rx_1 \left( 1- \frac{x_1}{k} \right) - \frac{\bar{m}{x_1}^2 x_2}{a_1x_1^2+b_1x_1+1} - \frac{qEx_1}{m_1E+m_2x_1}, \nonumber \\
\frac{dx_2}{dt} &= sx_2 \left( 1- \frac{a_2x_2}{n+x_1} \right).
\end{align}
Where $x_1, x_2$ are population of prey and predator respectively; all parameters
are positive except $b$, which is arbitrary and $ax_1^2
+ bx1 + 1 > 0, \forall x \geq 0$.\par 

The present paper is divided as follows: in section 2 the positivity  and boundedness of solutions is proved; section 3 has an analysis of the existence and positivity of trivial and interior equilibria points and section 4 shows results about stability of trivial equilibria points obtained in section 3. Finally in section 5, we analyse the stability of interior equilibria when the parameters vary, via the Hopf and Bogdanov-Takens bifurcation. Some numerical simulations are given in this section to show our results.

\section{Basic properties}
Before starting with the mathematical analysis of the model, we set $x_1(t)=kx(t), x_2(t)=ry(t)/ \bar{m}k,$ $\tau = rt$. Applying this change of variable, and using $t$ instead of $\tau$ for simplicity, we have:
\begin{align}
\frac{dx}{dt} &= x(1-x)- \frac{x^2y}{ax^2+bx+1}- \frac{hx}{c+x}, \nonumber \\
\frac{dy}{dt} &= y \left( \delta- \frac{\eta y}{m+x} \right), \label{ec2} \\ 
x(0) &= x_0>0, y(0)=y_0>0. \nonumber
\end{align}
Where $a=a_1k^2$, $b=b_1k$, $h=qE/rm_2k$, $c=m_1E/m_2k$, $\delta=s/r$, $\eta= sa_2/ \bar{m}k^2$, $m=n/k$. $b$ is an arbitrary constant, other new parameters are positive and $ax^2+bx+1 >0, \forall x \geq 0$. 
For now on, we will work with model \eqref{ec2}. \par 
To prove positivity and boundedness we use a lemma taken from \cite{chen2005nonlinear}.

\begin{lemma}
If $a,b>0$ and $\frac{dx}{dt} \leq x(a-bx)$, with $x(0)>0$, $y(0)>0$, then for all $t>0:$
$$x(t)\leq \frac{a}{b-Ce^{-at}}, \quad C=b- \frac{a}{x(0)}. $$ \label{lemma1}
\end{lemma}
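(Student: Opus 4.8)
The plan is to prove the differential inequality lemma by reducing it to a Bernoulli-type ODE, solving the associated equality case exactly, and then invoking a comparison argument.

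First I would observe that the hypothesis $\frac{dx}{dt}\le x(a-bx)$ is a logistic differential inequality, and that the stated bound is precisely the solution of the corresponding logistic equation $\frac{du}{dt}=u(a-bu)$ with $u(0)=x(0)$. So my first step is to solve this equality case explicitly. The standard device is the substitution $z=1/u$, which turns the logistic equation into the linear ODE $\frac{dz}{dt}=-az+b$; solving this linear equation with the integrating factor $e^{at}$ gives $z(t)=\frac{b}{a}+Ke^{-at}$ for a constant $K$. Imposing $u(0)=x(0)$, i.e. $z(0)=1/x(0)$, fixes $K=\frac{1}{x(0)}-\frac{b}{a}$, and inverting $u=1/z$ yields
\begin{equation*}
u(t)=\frac{a}{b+\left(\frac{a}{x(0)}-b\right)e^{-at}}=\frac{a}{b-Ce^{-at}},\qquad C=b-\frac{a}{x(0)},
\end{equation*}
which matches the claimed formula exactly.

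Having solved the equality case, the next step is to upgrade the inequality $\frac{dx}{dt}\le x(a-bx)$ into the bound $x(t)\le u(t)$ by a comparison principle. The cleanest way is again through the reciprocal: setting $w=1/x$ (valid since $x(0)>0$ and, by positivity arguments, $x$ stays positive on the relevant interval), the inequality transforms into a linear differential inequality $\frac{dw}{dt}\ge -aw+b$, because dividing by $-x^2<0$ reverses the direction. Then I would multiply through by the positive integrating factor $e^{at}$, recognize the left side as $\frac{d}{dt}(e^{at}w)\ge be^{at}$, integrate from $0$ to $t$, and solve for $w(t)$, obtaining $w(t)\ge \frac{b}{a}+\left(\frac{1}{x(0)}-\frac{b}{a}\right)e^{-at}$. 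Inverting this lower bound on $w=1/x$ gives the desired upper bound on $x(t)$.

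The main obstacle is not the algebra but the sign bookkeeping in the comparison step: reciprocating and dividing by $-x^2$ flips the inequality, so one must be careful that the direction of every inequality is tracked correctly, and one must justify that $x(t)>0$ throughout (otherwise the substitution $w=1/x$ is invalid and the final inversion could reverse sign). I would therefore state the positivity of $x$ as a standing assumption or derive it first, and I would also note the denominator $b-Ce^{-at}=b+\left(\frac{a}{x(0)}-b\right)e^{-at}$ remains positive for all $t>0$ so that the bound is well-defined. Since the paper cites this lemma from \cite{chen2005nonlinear}, the expected proof is short and may simply reference the comparison theorem for scalar ODEs, but the reciprocal-substitution argument above is the self-contained route I would write out.
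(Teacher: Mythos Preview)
Your argument is correct and self-contained: the reciprocal substitution $w=1/x$ converts the logistic inequality into the linear inequality $w'\ge -aw+b$, the integrating-factor step gives the lower bound on $w$, and inversion yields the stated upper bound on $x$. Your care about positivity of $x$ and of the denominator $b-Ce^{-at}$ is appropriate.

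As you anticipated in your final paragraph, the paper does not prove this lemma at all: it is stated with a citation to \cite{chen2005nonlinear} and used as a black box. So there is no proof in the paper to compare against. Your write-up supplies exactly the standard argument one would expect behind that citation, and in fact goes further than the paper by spelling out the comparison step rather than invoking a general comparison theorem for scalar ODEs. The only cosmetic remark is that the hypothesis $y(0)>0$ in the lemma statement is irrelevant to the conclusion about $x$; you correctly ignore it.
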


\begin{theorem}
Let the initial conditions $x(0) = x_0 > 0, y(0) = y_0 > 0$, then
all solutions of system \eqref{ec2} are positive and bounded.
\end{theorem}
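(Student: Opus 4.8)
The plan is to prove positivity and boundedness separately, handling each coordinate in turn. First I would establish positivity by exploiting the multiplicative structure of the right-hand side of \eqref{ec2}. Both equations have the form $\dot{x}=x\,f(x,y)$ and $\dot{y}=y\,g(x,y)$, so the coordinate axes are invariant: a solution starting with $x_0>0,\ y_0>0$ cannot cross them in finite time. Concretely, I would integrate each equation formally to write
\begin{equation*}
x(t)=x_0\exp\!\left(\int_0^t\Big[(1-x)-\tfrac{xy}{ax^2+bx+1}-\tfrac{h}{c+x}\Big]\,ds\right),
\end{equation*}
and similarly
\begin{equation*}
y(t)=y_0\exp\!\left(\int_0^t\Big[\delta-\tfrac{\eta y}{m+x}\Big]\,ds\right),
\end{equation*}
so that both remain strictly positive for all $t$ in their interval of existence. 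One must check that the integrands stay finite, which uses the standing hypothesis $ax^2+bx+1>0$ for $x\ge0$ together with $c,m>0$ keeping the denominators $c+x$ and $m+x$ bounded away from zero while $x,y\ge0$.

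Next I would establish boundedness of the prey. Dropping the nonnegative predation and harvesting terms gives the differential inequality $\dot{x}\le x(1-x)$, which is exactly the hypothesis of Lemma~\ref{lemma1} with $a=b=1$. Applying that lemma yields $x(t)\le 1/(1-C e^{-t})$ with $C=1-1/x_0$, and taking $t\to\infty$ shows $\limsup_{t\to\infty}x(t)\le 1$; in particular $x$ is bounded on $[0,\infty)$, say by some $M_x$ depending on $x_0$. The factor of $1$ here reflects the rescaling to carrying capacity $1$ in \eqref{ec2}.

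Finally I would bound the predator using the prey bound just obtained. Since $x(t)$ is bounded above, $m+x(t)\le m+M_x$, and the $y$-equation gives
\begin{equation*}
\frac{dy}{dt}=y\left(\delta-\frac{\eta y}{m+x}\right)\le y\left(\delta-\frac{\eta y}{m+M_x}\right),
\end{equation*}
which again matches Lemma~\ref{lemma1}, now with $a=\delta$ and $b=\eta/(m+M_x)$. This yields $\limsup_{t\to\infty}y(t)\le \delta(m+M_x)/\eta$, so $y$ is bounded as well. The main obstacle is the coupling: the predator bound genuinely depends on the prey bound through the $m+x$ term, so the arguments must be carried out in the correct order (prey first, then predator), and one should be a little careful that the comparison step for $y$ is valid because $m+x\le m+M_x$ makes the subtracted term smaller, preserving the direction of the inequality. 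A minor technical point worth a sentence is that these a priori bounds, combined with positivity, confine every solution to a compact region, which rules out finite-time blow-up and guarantees the solutions exist for all $t>0$.
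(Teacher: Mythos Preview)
Your proposal is correct and follows essentially the same route as the paper: positivity from the multiplicative structure of the equations, then the comparison $\dot{x}\le x(1-x)$ and Lemma~\ref{lemma1} for the prey, then the same lemma applied to $\dot{y}\le y\bigl(\delta-\eta y/(m+M_x)\bigr)$ for the predator. The only cosmetic difference is that you write the positivity step via the exponential integral representation, whereas the paper phrases it as invariance of the coordinate axes; your version is arguably cleaner, and your remark on finite-time existence is a welcome addition the paper leaves implicit.
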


\begin{proof}
Let $x_0, y_0$ be positive and $x(t), y(t)$ the solution of \eqref{ec2}. If $x(t_1) = 0$ for a $t_1 > 0$ then we have from system \eqref{ec2}
$$  \frac{dx}{dt} (t_1)=0, \quad \frac{dy}{dt}(t_1)=0. $$
Then the sets $\{ (0, y), y > 0\}$ and $\{(x, 0), x > 0\}$ are invariant under system \eqref{ec2},
and whenever the solution $(x(t), y(t))$ touches the x-axis or y-axis it will
remain constant and never crosses the axis and the solutions are always in the first
quadrant under positive initial conditions. \par 
Using the positivity of $x$ and $y$, it is not difficult to see that
$$ \frac{dx}{dt} < x(1-x), $$
then, applying Lemma \eqref{lemma1}
$$  x(t) \leq \frac{1}{1-Ce^{-t}}, \quad C=1- \frac{1}{x(0)}. $$
Note that $C > 0$ iff $x(0) > 1$ and $C \leq 0$ if $x(0)\leq 1$. From the fact that
$0 < e^{-t} < 1$ for $t > 0$, we have: if $C > 0$ then $x(t)\leq x(0)$; if $C \leq 0 $ then
$x(t) \leq  1$. Therefore $x(t) \leq \max \{x(0), 1\}$. \par 
Let $M:= \max \{x(0),1 \}$. From second equation of \eqref{ec2}
$$ \frac{dy}{dt} = y \left( \delta - \frac{\eta y}{m+M} y \right)= y(\delta-C_2y), $$
with $C_2= \frac{\eta}{m+M}$. Using Lemma \eqref{lemma1}:
$$ y(t) \leq \frac{\delta}{C_2-C_3e^{-\delta t}}. $$
Again, from the fact $ 0< e^{- \delta t}<1 $ for $t>0$, we have 
$$ y(t) \leq \max  \{ y(0), \frac{\delta(m+M)}{\eta} \}. $$
This completes the proof.

\end{proof}

\section{Existence and stability of equilibria points}

	\subsection{Existence}

To obtain the equilibria solutions of system \eqref{ec2} we look for solutions of the
following system of equations:
\begin{align}
 x(1-x)- \frac{x^2y}{ax^2+bx+1}- \frac{hx}{c+x}&=0 \\
 y \left( \delta- \frac{\eta y}{m+x} \right) &=0. \label{ec12}
\end{align}

From equations above, the isoclines of $y'=0$ are the curves $y = 0$, $ y = \delta(m+x)/\eta$, while the isoclines for $x'=0$ are given by $ x = 0$ and
\begin{equation}
y= (ax^2+bx+1) \left( \frac{-x^2+(1-c)x+(c-h)}{x(c+x)} \right)=p(x)G(x). \label{ec3}
\end{equation}
With $p(x)=ax^2+bx+1>0$. We are interested only in the existence of equilibria points with $x \geq 0$ and $y \geq 0$.
\begin{figure}
\begin{center}

\includegraphics[scale=0.5]{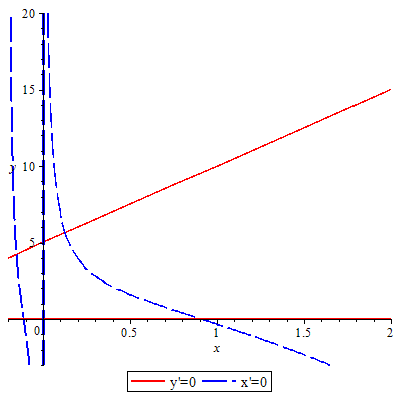}
\end{center}
\caption{Isoclines of the model \eqref{ec2} for parameters: $a = 1, b = 2, c = 0.2, h = 0.1, \delta = 0.5, \eta = 0.1, m = 1 $ } \label{fig1}
\end{figure}
Figure \eqref{fig1} shows the isoclines. It is not difficult to show that, $(0,0)$ is the trivial equilibrium and
$(0, \delta m/ \eta)$ is the unique prey extinction equilibrium, whenever $m\neq 0$.
Moreover, when $y = 0$ and $x > 0$, we have from \eqref{ec3}:
\begin{equation}
x^{\pm}= \frac{1-c \pm \sqrt{(c-1)^2-4(h-c)}}{2}, \label{ec5}
\end{equation}
From previous analysis we have the next theorem.
\begin{theorem}
Let $E^+ = (x^+, 0)$ and $E^- = (x^-, 0)$.  System
\eqref{ec2} has a trivial equilibrium $E = (0, 0)$ and a prey extinction equilibrium
$E_y = (0, \delta m/ \eta)$ (whenever $m \neq 0$). Also, the following assumptions about predator free equilibria
holds:
\begin{itemize}
 \item If $h-c<0$, then $x^{+}>0$ and $x^{-}<0$, so there exists a single positive equilibrium $E^+$.
 \item If $h - c > 0$, $(c - 1)^2 - 4(h - c) > 0$ and $c - 1 < 0$, $x^{+},x^{-}>0$, so there exists two positive
predator free equilibria : $E^+$ and $E^-$.
\item If $ h-c = 0$ and $c-1 < 0$, then $x^{-}=0, x^{+}>0$, so there exists a unique predator free  equilibrium $(1-c, 0)$.
\end{itemize} \label{teo5}
\end{theorem}

When $x \neq 0 \neq y$, then we can have  internal equilibria points, given by  $E^*=(x^*,y^*)$, where $x^{*}$ is a root of 
\begin{equation}
P(x)=x^4+Ax^3+Bx^2+Cx+D=0, \label{ec4}
\end{equation} 
with
\begin{align*}
A&= (c-1)+ \frac{b}{a} + \frac{\delta}{\eta}, \\
B&= (h-c)+ \frac{b}{a} (c-1) + \frac{\delta}{a \eta} (c+m) + \frac{1}{a}, \\
C&= \frac{b}{a}(h-c)+ \frac{1}{a}(c-1)+ \frac{c \delta m}{a \eta}, \\
D&= \frac{h-c}{a},
\end{align*} 
and \begin{equation}
y^{*}= \frac{\delta(m+x)}{\eta}. \label{ec6}
\end{equation}
\begin{figure}
\begin{center}
\includegraphics[scale=0.5]{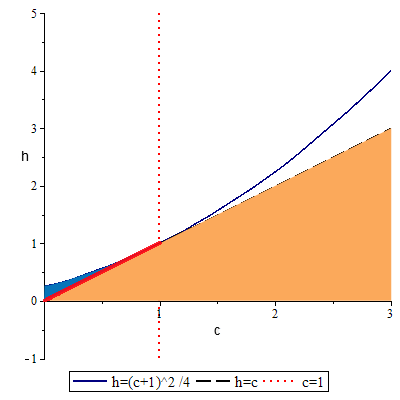}
\caption{Localization of areas $K_1, K_2, K_3$ in the $c-h$ plane. $K1$: blue, $K_2$: solid red line, $K_3$: orange} \label{fig2}
\end{center}
\end{figure}

Equation \eqref{ec4} has four roots, real or complex, but we are interested only in the positive ones. Note that the positive equilibria points are the interception of function \eqref{ec3} with the line $y= \frac{\delta (m+x)}{\eta}$ in the first quadrant (see figure \eqref{fig1}), so we ask for $p(x)G(x) > 0$ in an interval
$(x_1, x_2)$, with $x_1> x_2 \geq 0$; moreover, due to $p(x) > 0$ and $x(x+c) > 0$
for $x \geq 0$, we need $f(x)=-x^2+(1-c)x+(c-h)>0$, for some interval $(x_1, x_2)$.
The roots of $f(x)$ are $x^\pm$ from \eqref{ec5}, it is not difficult to
show that $f(x)$ takes positive values in the first quadrant if and only if
the roots $x^\pm$ are not equal and at least one of them is positive. Using this analysis, we conclude that positive non-trivial equilibria points exist only in one of the following
three areas (see figure \eqref{fig2} ):
\begin{align*}
K_1&= \{(h,c)>0; h>c>0 \hspace{0.1in} c<1 \hspace{0.1in} \text{and}  \hspace{0.1in} h<(c+1)^2 /4 \},\\
K_2&= \{(h,c)>0; 1>c=h>0 \hspace{0.1in}   \}, \\
K_3&= \{(h,c)>0; 0<h<c  \}.
\end{align*}

The easiest case of analysis of equilibria is when equation \eqref{ec4} is reduced to a cubic.
\begin{theorem}
Let $(h,c) \in K_2$ and $y^*$ as \eqref{ec6}. Define:
\begin{align*}
P= B- \frac{A^2}{3}, \quad Q= \frac{2A^3}{27}- \frac{AB}{3}+C, \quad \Delta= \left( \frac{Q}{2}\right)^2 + \left( \frac{P}{3}\right)^3,
\end{align*}
then the following assumptions hold for the existence of equilibria points of system \eqref{ec2}.
\begin{enumerate}
\item When $\Delta>0$, system has a unique equilibrium which is positive if and only if $C<0$, given by $(x^{*},y^{*})$ where:
\begin{equation}
x^*= \sqrt[3]{\frac{-Q}{2} + \sqrt{\Delta} }- \sqrt[3]{\frac{Q}{2}+ \sqrt{\Delta}} - \frac{A}{3} . \label{ec7}
\end{equation}
\item When $\Delta=0,$ system  has two equilibria, $E_1=(x_1,y_1)$, $E_2=(x_2,y_2)$, where:
\begin{equation}
 x_1 = 2 \sqrt[3]{\frac{-Q}{2}}- \frac{A}{3}, \quad x_2=-\sqrt[3]{\frac{-Q}{2}}- \frac{A}{3}, \label{ec8}
\end{equation}
and $y_i$ is the substitution of $x_i$ in $y^{*}$. $E_1$ is positive if and only if $C> \frac{A^{3}-4AB}{12} $ and $E_2$ is positive if and only if $C> \frac{AB}{3}. $
\item When $\Delta<0$, system has three equilibria points (not necessarily positive), $E_k=(x_k,y_k),$ where 
$$x_k= 2 \sqrt{- \frac{P}{3}}  \cos \left( \frac{\phi + 2\pi k}{3} \right)- \frac{A}{3}, $$
and $\phi$ is determined by $ \cos \phi= - \dfrac{Q/2}{\sqrt{-(P/3)^3}} $.
\end{enumerate} \label{teo4}
\end{theorem}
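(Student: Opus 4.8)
The plan is to use the defining relation $h=c$ of the region $K_2$ to collapse the quartic \eqref{ec4} to a cubic, and then to run Cardano's method together with elementary sign arguments for the positivity claims. First I would substitute $h=c$ into the coefficients: this forces $D=(h-c)/a=0$ and annihilates the $(h-c)$-terms in $B$ and $C$, so that $P(x)=x\,q(x)$ with $q(x)=x^{3}+Ax^{2}+Bx+C$. The factor $x=0$ only reproduces the boundary equilibria $E$ and $E_y$ of Theorem \ref{teo5}, so the interior equilibria $E^{*}=(x^{*},y^{*})$ correspond exactly to the positive roots of $q$, with $y^{*}$ recovered from \eqref{ec6}. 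From here the question is purely about the real positive roots of a monic cubic.

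Next I would depress the cubic by the shift $x=t-A/3$, producing $t^{3}+Pt+Q=0$ with $P=B-A^{2}/3$ and $Q=2A^{3}/27-AB/3+C$, exactly as defined in the statement, whose discriminant is $\Delta=(Q/2)^{2}+(P/3)^{3}$. The three cases are the classical trichotomy for the sign of $\Delta$: one real root when $\Delta>0$, a simple real root together with a real double root when $\Delta=0$, and three distinct real roots when $\Delta<0$. The explicit radical expressions \eqref{ec7} and \eqref{ec8}, and the trigonometric formula in the $\Delta<0$ case, are precisely the Cardano and Vi\`ete solutions of the depressed cubic after undoing the shift $x=t-A/3$; that part is a derivation rather than an obstacle.

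The genuine content is in the positivity criteria. For $\Delta>0$ the two non-real roots form a complex-conjugate pair, so $q(x)=(x-x^{*})(x^{2}+px+q)$ with the quadratic factor strictly positive on $\mathbb{R}$; hence $q$ changes sign only at $x^{*}$, and $x^{*}>0$ if and only if $q(0)=C<0$. For $\Delta=0$ I would translate each inequality $x_i>0$ through the real cube root $s=\sqrt[3]{-Q/2}$, using that $t\mapsto t^{3}$ is increasing: for the double root, $x_2=-s-A/3>0 \iff s<-A/3 \iff -Q/2<-A^{3}/27 \iff Q>2A^{3}/27 \iff C>AB/3$, giving the stated condition for $E_2$; the analogous computation for $x_1=2s-A/3$ cubes the inequality $s>A/6$ and substitutes the formula for $Q$ to yield the linear threshold on $C$. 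For $\Delta<0$ the statement only asserts the existence of three real roots, which is immediate from the trigonometric form, so no positivity estimate is needed there.

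I expect the main obstacle to be the bookkeeping of signs in the $\Delta=0$ case, where one must cube inequalities involving $A/3$ and $A/6$ and then re-express a bound on $Q$ as a bound on $C$ without assuming a fixed sign for $A$. This is exactly where a reversed inequality can creep in, so I would cross-check the $E_1$ threshold independently against the factorization $q(x)=(x-x_1)(x-x_2)^{2}$: since $C=q(0)=-x_1x_2^{2}$ and $x_2^{2}>0$, the simple root satisfies $x_1>0 \iff C<0$, which must coincide on the surface $\Delta=0$ with the radical-based threshold and which I would use to pin down the correct direction of the inequality for $E_1$.
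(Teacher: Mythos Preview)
Your plan matches the paper's proof: reduce to the cubic via $h=c\Rightarrow D=0$, apply Cardano's trichotomy, and read positivity from Vieta's relations. The paper handles case~1 exactly as you do (the complex-conjugate pair has positive product, so the sign of the real root is the sign of $-C$), and for case~2 it simply asserts that substituting $Q$ into \eqref{ec8} yields the two thresholds, without the intermediate cubing you spell out.

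Your cross-check for $E_1$ is worth carrying through, because it does not merely confirm the direction of the inequality---it corrects it. Cubing $s>A/6$ and substituting $Q=2A^{3}/27-AB/3+C$ gives $C<(4AB-A^{3})/12$, i.e.\ the \emph{opposite} inequality to the one printed in the statement; your Vieta identity $C=q(0)=-x_{1}x_{2}^{2}$ independently gives $x_{1}>0\iff C<0$, which agrees with the ``$<$'' direction. A quick sanity check: for $q(x)=(x-1)(x-2)^{2}$ one has $A=-5$, $B=8$, $C=-4$ and $x_{1}=1>0$, yet $(A^{3}-4AB)/12=35/12>C$, so the printed condition $C>(A^{3}-4AB)/12$ fails. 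Your method is sound and would detect this sign slip in the paper's $E_{1}$ threshold; the $E_{2}$ threshold $C>AB/3$ comes out correctly by your computation.
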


\begin{proof}
In section $K_2$, equation \eqref{ec4} is reduced to
$$x^3+Ax^2+Bx+C=0.$$
Using the Cardano's formula ( \cite{uspensky2004teoria} ) in equation above, we have the following:
\begin{enumerate}
\item When $\Delta>0$, the equation has a real root given by \eqref{ec7} and two complex conjugate. Let $x_1,x_2,x_3$  the roots, and assume (without loss of generality) that $x_1$ is the real one, then $x^3+Ax^2+Bx+C=(x-x_1)(x-x_2)(x-x_3)$, so $-x_1x_2x_3=C$. Due to $x_2 x_3>0$, we arrive to $x_1=-C$, therefore $x_1>0$ iff $C<0$.
\item If $\Delta=0$, we have three real roots, two of them equal, both given by \eqref{ec8}. Substituting the value of $Q$ in \eqref{ec8} we obtain that $x_1>0$ is equivalent to $C> \frac{A^3-4AB}{12} $ and $x_2>0$ is equivalent to $C> \frac{AB}{3} $.
\item For $\Delta<0$, a direct application of Cardano's formula gives the result.
\end{enumerate}
\end{proof}

\begin{figure}
\subfigure[] {\includegraphics[scale=0.5]{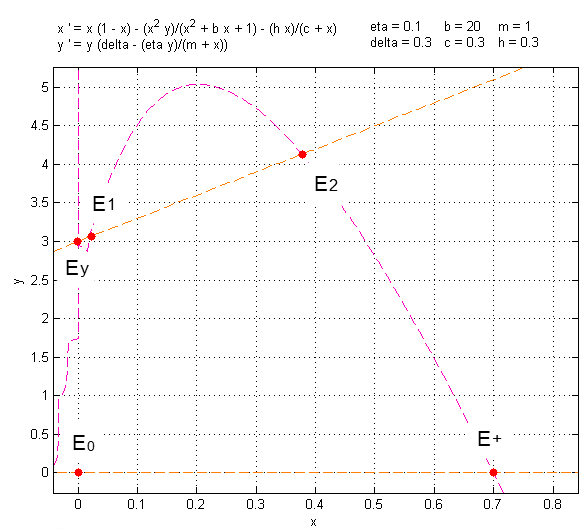}}
\subfigure[] {\includegraphics[scale=0.5]{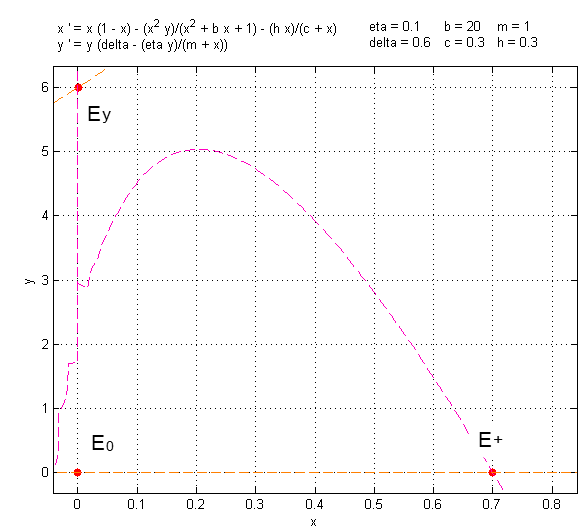}}
\begin{center}
\subfigure[] {\includegraphics[scale=0.5]{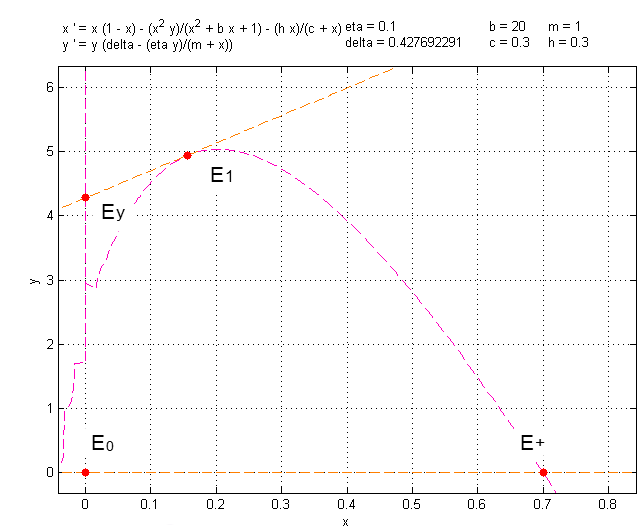}} 
\end{center}
\caption{Isoclines of system \eqref{ec2} in section $K_2$. The values of parameters are: $a=1,b=20, c=h=0.3,m=1,\eta=0.1 $, for these values $\Delta=0$ at $\delta= 0.42769229198509138494$ and $\delta= 10.499994907318960196$. We have the trivial equilibrium $E_0$, the prey extinction $E_y$ and a single predator free $E^+$ (theorem \eqref{teo5}). a) $\Delta<0$: there exists two positive equilibria and a negative one (which is outside the range of figure). b)$\Delta>0$: we have $C>0$, so there is not a positive equilibria point. c) $\Delta=0$, we have a positive equilibrium and a negative one (outside the range of figure) }\label{fig3}
\end{figure}
Figure \eqref{fig3} shows the equilibria points in section $K_2$ depending on the sign of $\Delta$.
In case $K_1$ and $K_3$, we follow the method of Ferrari from \cite{uspensky2004teoria} to solve quartic polynomials (see  appendix \eqref{apen1} ).
\begin{theorem}
Let $(h,c) \in K_1$. Define  $E_1^{\pm}=(x_1^{\pm},y_1^{\pm}) $, $E_2^{\pm}=(x_2^{\pm},y_2^{\pm}) $, where $x_i^{\pm}$ are:
\begin{equation}
x_1^{\pm} = \frac{1}{2} \left( - \sqrt{2u} \pm \sqrt{\Delta_1}- \frac{A}{2} \right), \quad
x_2^\pm = \frac{1}{2} \left( \sqrt{2u} \pm \sqrt{\Delta_2}- \frac{A}{2} \right), \label{ec9}
\end{equation} 
$y_i^{\pm}$ is the substitution on $x_i^{\pm}$ in \eqref{ec6} and the terms $u, \Delta_1, \Delta_2$ are defined in  appendix \eqref{apen1}. Assume $Q_2= A^{3}/8-AB/2+C \neq 0 $, then the following assumptions hold:
\begin{enumerate}
\item If $\Delta_1, \Delta_2<0$, there are no positive equilibrium points.
\item If $\Delta_1 \geq 0, \Delta_2<0$, there are two  equilibria points: $E_1^{-}$ and $E_1^{+}$. $E_1^{-}$ is positive if and only if $u<\frac{1}{2} \left( - \frac{A}{2}- \sqrt{\Delta_1} \right)^2 $ and $E_1^{+}>0$ if and only if $u< \frac{1}{2} \left( - \frac{A}{2}+ \sqrt{\Delta_1} \right)^2$.
\item If $\Delta_2 \geq 0, \Delta_1<0$, there are two  equilibria: $E_2^{-}$ and $E_2^{+}$. $E_2^{-}>0$ if and only if $u>\frac{1}{2} \left(  \frac{A}{2}+ \sqrt{\Delta_2} \right)^2$ and $E_2^{+}>0$ if and only if $u>\frac{1}{2} \left(\frac{A}{2}- \sqrt{\Delta_2} \right)^2$.
\item If $\Delta_2 \geq 0, \Delta_1\geq 0$, we have four  equilibria : $E_1^{\pm}$ and $E_2^{\pm}$.
\end{enumerate}
\end{theorem}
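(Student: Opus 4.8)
The plan is to read the statement as a direct consequence of Ferrari's solution of the quartic \eqref{ec4}, followed by elementary sign analysis of the resulting explicit roots. First I would remove the cubic term by the substitution $x = y - A/4$, turning $P(x)$ into a depressed quartic $y^4 + p y^2 + q y + r$. A short computation identifies the linear coefficient as $q = A^3/8 - AB/2 + C = Q_2$, so the standing hypothesis $Q_2 \neq 0$ is exactly the assertion that $q\neq 0$, i.e. that we are in the generic (non-biquadratic) case of Ferrari's method, where the factorization below is valid. This is the reason the condition appears in the statement.

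Next I would carry out Ferrari's step: introduce the resolvent parameter $u$ (the quantity defined in appendix \eqref{apen1}), rewrite the depressed quartic as a difference of two squares, and factor it as a product of two monic quadratics in $y$. Solving these two quadratics reproduces exactly the four expressions \eqref{ec9}, the first quadratic yielding $x_1^{\pm}$ with discriminant $\Delta_1$ and the second yielding $x_2^{\pm}$ with discriminant $\Delta_2$. Since each factor has real coefficients, it contributes two real roots precisely when its discriminant is nonnegative and a pair of complex-conjugate roots otherwise; applying this dichotomy to the two factors independently yields the four cases of the theorem: $\Delta_1,\Delta_2<0$ gives no real root, exactly one of them nonnegative gives the corresponding pair $E_1^{\pm}$ or $E_2^{\pm}$, and $\Delta_1,\Delta_2\ge 0$ gives all four. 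At this point I would also record that, because $\delta,\eta,m>0$, the second coordinate $y^{*}=\delta(m+x^{*})/\eta$ from \eqref{ec6} is automatically positive whenever $x^{*}\ge 0$, so the positivity of an interior equilibrium reduces entirely to the positivity of its first coordinate.

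It remains to convert each real root into a condition on $u$. For this I would impose $x_i^{\pm}>0$ on the corresponding formula in \eqref{ec9}, isolate the term $\sqrt{2u}$, and square. For instance $x_1^{-}>0$ reads $\sqrt{2u} < -A/2 - \sqrt{\Delta_1}$, which upon squaring gives $u < \tfrac12\left(-A/2 - \sqrt{\Delta_1}\right)^2$, and the other three inequalities follow in the same fashion. The one point deserving genuine care — and the main obstacle — is that squaring an inequality is legitimate only when the sign of the right-hand side is controlled: the right-hand sides $\pm A/2 \pm \sqrt{\Delta_i}$ are not automatically nonnegative, so I would argue separately that when the relevant right-hand side is negative the corresponding root cannot be positive (as $\sqrt{2u}\ge 0$), while when it is nonnegative the squared inequality is equivalent to the original. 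Tracking these sign cases is precisely what produces the asymmetry between the $E_1$ and $E_2$ conditions (strict upper bounds on $u$ in case 2 versus strict lower bounds in case 3); once it is handled, the remaining manipulations are routine algebra.
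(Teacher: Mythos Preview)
Your proposal is correct and follows exactly the approach the paper intends: the paper's own ``proof'' is the single sentence ``The proof of this theorem is directly from the Ferrari's formulas,'' deferring everything to appendix~\ref{apen1}, and your outline simply unpacks that reference---the Tschirnhaus shift, the role of $Q_2\neq 0$, the factorization into two quadratics with discriminants $\Delta_1,\Delta_2$, and the elementary sign analysis of the explicit roots. Your explicit attention to the legitimacy of squaring (the sign of $\pm A/2\pm\sqrt{\Delta_i}$) is in fact more careful than anything the paper spells out.
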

\begin{figure}
\begin{center}
\subfigure[] {\includegraphics[scale=0.5]{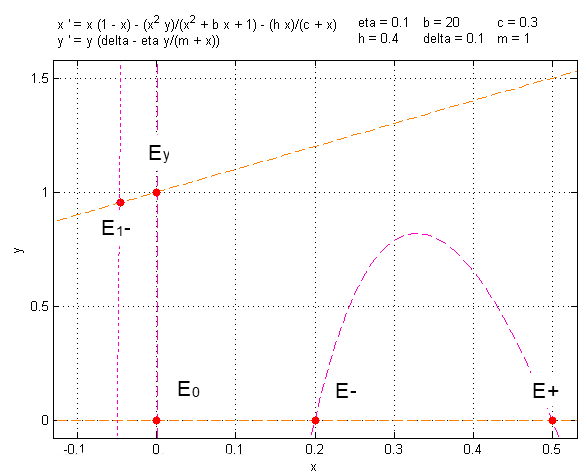}}
\subfigure[] {\includegraphics[scale=0.5]{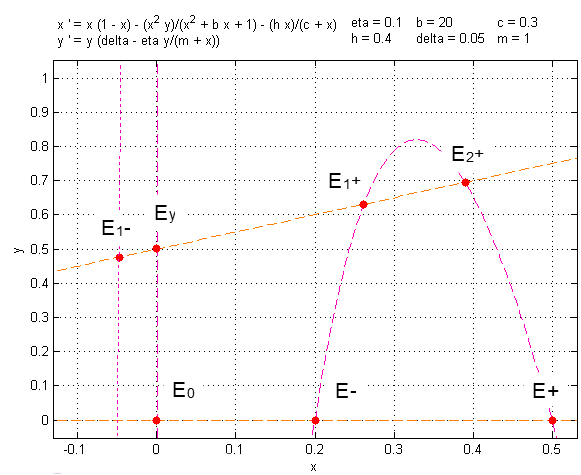}}
\caption{Isoclines of system \eqref{ec2} in section $K_1$. The values of parameters are: $a=1,b=20, c=0.3,m=1, h=0.4 $. We have the trivial equilibrium $E_0$, the prey extinction $E_y$ and two  predator free equilibria: $E^-, E^+$ (theorem \eqref{teo5}). a) $\Delta_1>0, \Delta_2<0$: there are no positive equilibria. b)$\Delta_1>0, \Delta_2>0$: there are two positive and two negative equilibria ($E_2^{-}$ and $E_1^{-}$ which is outside the range of figure).}\label{fig4}
\end{center}
\end{figure}
Figure \eqref{fig4} shows the equilibria points for parameters in $K_1$. \par 
   
The proof of this theorem is directly from the Ferrari's formulas. These formulas can be applied also to section $K_3$, to obtain the following result:

\begin{theorem}
Let $(h,c) \in K_3$, $x_1^{\pm}$ and $x_2^{\pm}$ defined as \eqref{ec9} and $\Delta_1, \Delta_2$ defined as in appendix \eqref{apen1}. Assume $Q_2 \neq 0$ and $E_i^{\pm}$ defined as in previous theorem, then:
\begin{enumerate}
\item If $\Delta_1<0$ (this implies $\Delta_2 \geq 0 $), then we have a unique positive equilibrium $E_2^{+}$.
\item If $\Delta_2<0$ (this implies $\Delta_1 \geq 0 $), then we have a unique positive equilibrium $E_1^{+}$.
\item If $\Delta_1, \Delta_2 \geq 0$ then we have one or three positive equilibria points.
\end{enumerate}
\end{theorem}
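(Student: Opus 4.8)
The plan is to exploit the single sign condition that distinguishes $K_3$ from the other regions, namely that the constant term $D=(h-c)/a$ of the quartic $P$ is strictly negative (since $0<h<c$ and $a>0$), and to feed it into the factorization produced by Ferrari's method. First I would recall from the appendix that Ferrari's construction writes $P(x)=(x^2+p_1x+q_1)(x^2+p_2x+q_2)$, where $x_1^{\pm}$ are the roots of the first factor and $x_2^{\pm}$ the roots of the second, and where $\Delta_1,\Delta_2$ are the respective discriminants; the hypothesis $Q_2\neq 0$ is exactly what guarantees this factorization is non-degenerate, so that $\sqrt{2u}$ separates the two pairs and $x_i^{\pm}$ genuinely belong to one quadratic each. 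Here I would check directly from \eqref{ec9} that $x_1^++x_1^-=-\sqrt{2u}-A/2$ and $x_2^++x_2^-=\sqrt{2u}-A/2$, confirming the pairing, and hence that by Vieta applied to the monic quartic, $q_1q_2=(x_1^+x_1^-)(x_2^+x_2^-)=D<0$.

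The key observation is then that $q_1$ and $q_2$ are real with strictly negative product, so exactly one of them is negative and neither vanishes. This immediately rules out $\Delta_1<0$ and $\Delta_2<0$ holding simultaneously: two negative discriminants would make each factor carry a complex-conjugate pair, forcing $q_1=|x_1^+|^2>0$ and $q_2=|x_2^+|^2>0$, hence $q_1q_2>0$, contradicting $D<0$. This establishes the parenthetical implications ``$\Delta_1<0\Rightarrow\Delta_2\geq0$'' and ``$\Delta_2<0\Rightarrow\Delta_1\geq0$'' of items 1 and 2.

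For item 1, when $\Delta_1<0$ the first factor contributes $q_1>0$, so $q_2=D/q_1<0$; thus the two real roots $x_2^{\pm}$ have opposite signs. Since $\sqrt{\Delta_2}\geq 0$ gives $x_2^+\geq x_2^-$ in \eqref{ec9}, the unique positive root is $x_2^+$, and as $x>0$ forces $y^*=\delta(m+x)/\eta>0$ through \eqref{ec6}, this produces the single interior equilibrium $E_2^+$. Item 2 is proved identically after swapping the two factors: $\Delta_2<0$ yields $q_1<0$, the real pair $x_1^{\pm}$ has opposite signs, and $x_1^+\geq x_1^-$ singles out $E_1^+$.

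For item 3, when $\Delta_1,\Delta_2\geq 0$ all four roots of $P$ are real and their product equals $D<0$; since no root can vanish, an odd number of them is negative, so either one or three are negative and hence three or one are positive, each positive root again giving a valid interior equilibrium via \eqref{ec6}. I expect no genuine analytic difficulty here: the whole argument is driven by the sign of $D$. The main thing to be careful about is the bookkeeping, namely verifying from \eqref{ec9} that each $\{x_i^+,x_i^-\}$ really is the root set of one quadratic factor (so that the Vieta splitting $q_1q_2=D$ is legitimate) and that the $+$ root is always the larger one in its pair; everything else follows from elementary sign counting.
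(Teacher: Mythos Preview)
Your proposal is correct and follows essentially the same approach as the paper: both arguments hinge on the Vieta relation $x_1^{+}x_1^{-}x_2^{+}x_2^{-}=D=(h-c)/a<0$ in $K_3$, use the fact that a complex-conjugate pair has positive product to rule out both discriminants being negative and to force the remaining real pair to have opposite signs, and then do a sign count when all four roots are real. Your write-up is a bit more explicit about the quadratic factorization $q_1q_2=D$ and the ordering $x_i^{+}\geq x_i^{-}$, but the underlying reasoning is identical to the paper's.
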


\begin{proof}
We know that there are four possible equilibria points $E_i^{\pm}$, with $i=1,2$ and $x_i^{\pm}$ the roots of \eqref{ec4}, which can be four real roots, two real and two complex or four complex (two pairs of complex conjugate). Polynomial \eqref{ec4} can be expressed as:
\begin{align*}
P&(x) = (x-x_1^{+})(x-x_1^{-})(x-x_2^{+})(x-x_2^{-}) , \\
&= x^{4}-(x_1^{+}+x_2^{+}+x_1^{-}+x_2^{-})x^3+(x_1^{+}(x_1^{-}+x_2^{+}+x_2^{-})+ x_1^{-}(x_2^{+}+x_2^{-}) \\
&+x_2^{+}x_2^{-} )x^{2}-(x_1^{+}x_1^{-}x_2^{+}+ x_1^{+}x_1^{-}x_2^{-}+ x_1^{+}x_2^{+} x_2^{-} + x_1^{-}x_2^{+}x_2^{-})x + x_1^{+}x_1^{-}x_2^{+}x_2^{-},
\end{align*}
so, $D=x_1^{+}x_1^{-}x_2^{+}x_2^{-}= \frac{h-c}{a}<0$. Note that there is no root equal zero due to the sign of $D$. Making an analysis of the possibilities in roots, it is not difficult to show that for $h-c<0$ we have three possible cases: two complex and two real with different sign, three positive and one negative or three negative and one positive.
\begin{enumerate}
\item If $\Delta_1<0$, then $x_1^{\pm}$ are both complex conjugate. Then the roots $x_2^{\pm}$ are real with different sign, moreover $x_2^{-}<0<x_2^{+}$. This implies $\Delta_2\geq 0$. Therefore the positive equilibrium is $E_2^{+}$.
\item If $\Delta_2<0$, then $x_2^{\pm}$ are both complex conjugate. Then the roots $x_1^{\pm}$ are real with different sign, moreover $x_2^{-}<0<x_2^{+}$. This implies $\Delta_2\geq 0$. Therefore the positive equilibrium is $E_1^{+}$.
\item If $\Delta_1 , \Delta_2 \geq0$, then the roots are three positive and one negative or one positive and three negative.
\end{enumerate}  
\end{proof}
\begin{figure}
\subfigure[] {\includegraphics[scale=0.5]{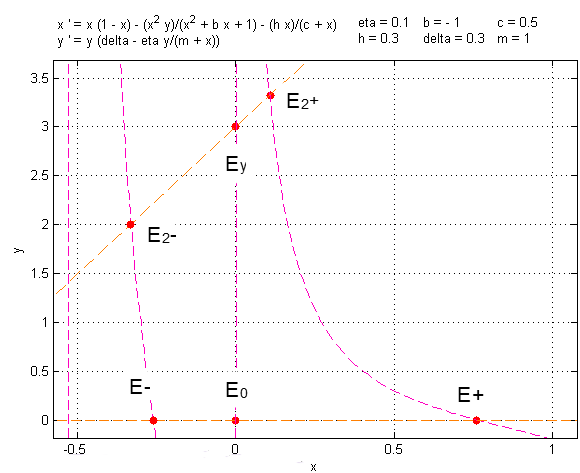}}
\subfigure[] {\includegraphics[scale=0.5]{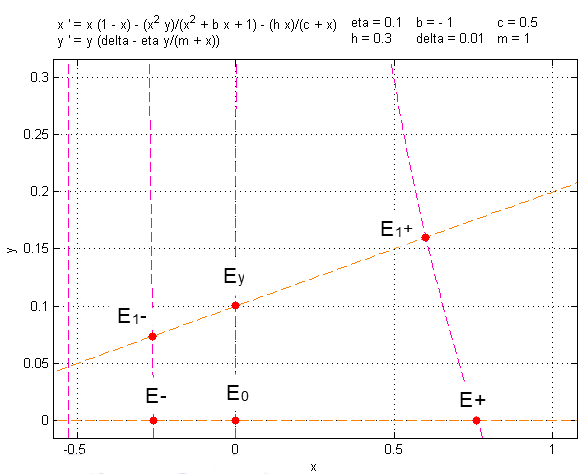}}
\begin{center}
\subfigure[] {\includegraphics[scale=0.5]{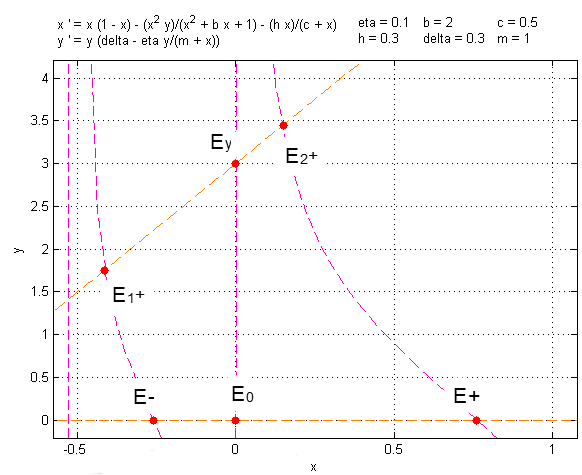}} 
\end{center}
\caption{Isoclines of system \eqref{ec2} in section $K_2$. The values of parameters are: $a=1,b=-1, c=0.5, h=0.3, m=1 $. We have the trivial equilibrium $E_0$, the prey extinction $E_y$ and a positive predator free equilibrium $E^+$ (theorem \eqref{teo5}). a) $\Delta_1<0, \Delta_2>0$: there is a positive equilibrium $E_2^+$ and a negative one $E_2^{-}$. b)$\Delta_1<0, \Delta_2>0$: there is a positive equilibrium $E_1^+$ and a negative one $E_1^{-}$. c) Changing $b=2$, then $\Delta_1>0, \Delta_2>0$, we have a positive equilibrium $E_1^{+}$ and three negative ones (two of them outside the range of figure). }\label{fig5}
\end{figure}
		
	\subsection{Stability}

From  theorem 2, we have four trivial equilibria points, $E=(0,0)$, $E_y=(0, \delta m/ \eta)$, $E^{+}=(x^{+},0)$ and $E^{-}=(x^{-},0)$. The stability of each one is given in the following theorems:

\begin{theorem}
The following hold for trivial equilibria point $E$ of system \eqref{ec2}
\begin{itemize}
\item If $c-h>0$, it is an unstable node.
\item When $c-h<0$, it is a saddle. 
\item If $h=c$ and $c \neq 1$ then $E$ is a saddle node, ie, is divided into two parts along the positive and negative $y-$axis, one part is a parabolic sector and the other part consists of two hyperbolic sectors. Moreover, the parabolic sector is on the right half plane if $c<1$ and on the left half plane when $c>1$.
\item If $h=c$ and $c=1$, it is a saddle.
\end{itemize}
\end{theorem}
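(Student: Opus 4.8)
The plan is to read off the entire classification from the Jacobian of \eqref{ec2} at $E=(0,0)$. The key initial observation is that both the predation term $x^2y/(ax^2+bx+1)$ and the Leslie--Gower contribution carry factors vanishing to high order at the origin, so the off-diagonal entries of the linearization die and the Jacobian is diagonal,
$$J(0,0)=\begin{pmatrix} \dfrac{c-h}{c} & 0 \\[4pt] 0 & \delta \end{pmatrix},$$
with eigenvalues $\lambda_1=(c-h)/c$ and $\lambda_2=\delta$. Since $c,\delta>0$ are fixed positive parameters, $\lambda_2>0$ always, and the sign of $\lambda_1$ settles the first two cases at once: if $c-h>0$ then $\lambda_1>0$, both eigenvalues are positive, and $E$ is an unstable node; if $c-h<0$ then $\lambda_1<0<\lambda_2$, and $E$ is a saddle. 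These are hyperbolic, so the Hartman--Grobman theorem suffices.

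The remaining cases are the degenerate ones $h=c$, where $\lambda_1=0$ and $E$ is semi-hyperbolic (one zero eigenvalue, one positive eigenvalue $\delta$). Here I would invoke the standard classification of semi-hyperbolic singular points (Theorem~2.19 in Dumortier--Llibre--Art\'es, or the analogous result in Zhang et al.). Because $J(0,0)$ is already diagonal, the $y$-axis is the unstable eigendirection and the center manifold is tangent to the $x$-axis; moreover $x=0$ is exactly invariant for \eqref{ec2}, so the positive and negative $y$-axis are genuine separatrices. Writing the center manifold as $y=\varphi(x)$ with $\varphi(0)=\varphi'(0)=0$, the invariance relation together with $\dot y=\delta y+O(y^2)$ forces $\varphi(x)=O(x^3)$, whence the predation term $-x^2\varphi(x)/(ax^2+bx+1)=O(x^5)$ is irrelevant to the low-order reduced flow. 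Setting $h=c$ and expanding the surviving logistic and harvesting terms gives
$$x(1-x)-\frac{cx}{c+x}=\frac{1-c}{c}\,x^2+O(x^3),$$
so the reduced equation on the center manifold is $\dot x=\tfrac{1-c}{c}x^2+O(x^3)$.

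When $c\neq 1$ the leading exponent is even with nonzero coefficient $a_2=(1-c)/c$, and the semi-hyperbolic theorem yields a saddle--node: one parabolic and two hyperbolic sectors separated by the $y$-axis separatrices. To locate the parabolic sector I would read off the model flow $\dot u=a_2u^2,\ \dot w=\delta w$: integrating $dw/du=\delta w/(a_2u^2)$ gives $w=K\exp(-\delta/(a_2u))$, and an orbit reaches the origin tangentially ($w\to0$ as $u\to0$) precisely on the half where $a_2u>0$. Hence the parabolic sector is the right half-plane $x>0$ when $c<1$ (so $a_2>0$) and the left half-plane when $c>1$ (so $a_2<0$), exactly as claimed.

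Finally, when $h=c=1$ the quadratic coefficient $(1-c)/c$ vanishes, so I would push the expansion one order further, obtaining
$$x(1-x)-\frac{x}{1+x}=-x^3+O(x^4),$$
i.e. $\dot x=-x^3+O(x^4)$ on the center manifold (again unaffected by $\varphi=O(x^3)$). The leading exponent is now odd with negative coefficient, so the reduced flow is attracting along the center direction; paired with the transverse unstable direction $\delta>0$ this makes $E$ a topological saddle. I expect the genuine work to lie entirely in the cases $h=c$: justifying that the predation term drops out of the reduced dynamics, and then correctly extracting the sector structure — together with the right half-plane for the parabolic sector in the saddle--node case — from the parity and sign of the leading coefficient of the center-manifold equation.
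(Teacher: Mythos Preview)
Your proof is correct and follows essentially the same route as the paper: compute the diagonal Jacobian at the origin, dispatch the hyperbolic cases $c\ne h$ by sign, and for $h=c$ invoke the standard semi-hyperbolic classification after reading off the leading term of the reduced one-dimensional flow. The paper cites Zhang et al.\ (Theorem~7.1) where you cite Dumortier--Llibre--Art\'es, rescales time by $\tau=\delta t$ to put the nonzero eigenvalue in standard form, and simply takes $\phi(x)=0$ as the implicit solution (equivalently: $y=0$ is invariant, so it \emph{is} a center manifold, which would let you skip your $\varphi=O(x^3)$ estimate entirely). Your explicit model-flow argument for locating the parabolic sector is a nice addition the paper leaves to the cited theorem, but otherwise the two arguments coincide.
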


\begin{proof}
For $E=(0,0)$, the Jacobian matrix is given by 

\begin{equation}
J(E)=\left( 
\begin{matrix}
1-\frac{h}{c} & 0 \\ 0 & \delta
\end{matrix}
\right).
\end{equation}
The characteristic polynomial is $P_E( \lambda )= (1-h/c-\lambda)(\delta- \lambda)$, with roots $ \lambda_1 = 1- h/c $ and $ \lambda_2=\delta $. Clearly,  when $c-h<0$, $\lambda_1<0$ and we have a saddle. When $c-h>0$ we have an unstable node  \par 
When $h=c$ we have an eigenvalue $\lambda=0$, so using theorem 7.1 from \cite{zhi2006qualitative}, we can rewrite the system as
\begin{align*}
\frac{dx}{dt} &= -{\frac {{x}^{2} \left( ac{x}^{2}+a{x}^{3}-a{x}^{2}+bcx+b{x}^{2}-bx+cy
+xy+c+x-1 \right) }{ \left( a{x}^{2}+bx+1 \right)  \left( c+x \right) 
}}, \\
\frac{dy}{dt} &= \delta\,y-{\frac {{y}^{2}\eta}{m+x}}.
\end{align*}
Making the change of time $\tau= \delta t$, and using $t$ instead of $\tau$, then system above is transformed into:
\begin{align*}
\frac{dx}{dt} &=-{\frac {{x}^{2}y}{\delta\, \left( a{x}^{2}+bx+1 \right) }}-{\frac {{x
}^{2} \left( ac{x}^{2}+a{x}^{3}-a{x}^{2}+bcx+b{x}^{2}-bx+c+x-1
 \right) }{\delta\, \left( a{x}^{2}+bx+1 \right)  \left( c+x \right) }
}\\
&= P_2(x,y), \\
\frac{dy}{dt} &= -{\frac {\eta\,{y}^{2}}{\delta\, \left( m+x \right) }}+y= y+Q_2(x,y).
\end{align*}
Taking $\phi(x)=0$, and expanding $\psi:=P_2(x, \phi(x))$:
$$ \psi = -{\frac { \left( c-1 \right) {x}^{2}}{c\delta}}- \left( 1-{\frac {c-1}
{c}} \right) \frac{x^3}{c \delta}.
 $$
 So, $m=2$, $a_m= -(c-1)/c \delta$ and sgn$(a_m)=$sgn$(1-c)$. By theorem 7.1 of \cite{zhi2006qualitative}, if $c \neq 1$ then $E$ is a saddle node. If $c=1$ then we have $m=3$, $a_m=-1$ and $E$ is a saddle.
\end{proof}

\begin{theorem}
The following holds for equilibria $E_y=(0, \delta m/ \eta)$:
\begin{itemize}
\item $E_y$ is locally asymptotically stable when $c-h<0$ and a saddle when $c-h>0$.
\item If $c=h$ and $c\delta\,m+c\eta-\eta \neq 0$, then it is a saddle node. Moreover if $c\delta\,m+c\eta-\eta>0$(<0) the parabolic sector is in the right (left) half-plane.
\item If $c=h$ and $c\delta\,m+c\eta-\eta = 0$, then $E_y$ is an unstable node if $ b\delta\eta m-{\delta}^{2}{m}^{2}-2 \delta \eta
m-\delta \eta-{\eta}^{2}<0$ and a saddle if $ b\delta \eta m-{\delta}^{2}{m}^{2}-2 \delta\eta m-\delta \eta-{\eta}^{2}>0$.
\end{itemize}
\end{theorem}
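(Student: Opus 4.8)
The plan is to mirror the structure of the preceding proof for $E=(0,0)$, since $E_y$ is again a boundary equilibrium whose degeneracy is governed by the same condition $h=c$. First I would compute the Jacobian of \eqref{ec2} at $E_y=(0,\delta m/\eta)$. The term $x^2y/(ax^2+bx+1)$ and its $x$-derivative vanish at $x=0$, the harvesting term contributes $h/c$ to the $x$-derivative of the first component, and differentiating the second component at $y=\delta m/\eta$ gives
\[
J(E_y)=\begin{pmatrix} 1-\dfrac{h}{c} & 0 \\[4pt] \dfrac{\delta^2}{\eta} & -\delta \end{pmatrix},
\]
with eigenvalues $\lambda_1=(c-h)/c$ and $\lambda_2=-\delta$. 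Since $\delta>0$, the second eigenvalue is always negative, so the hyperbolic item is immediate: $c-h<0$ forces $\lambda_1<0$ and $E_y$ is a stable node, hence locally asymptotically stable, while $c-h>0$ gives $\lambda_1>0$ and a saddle.

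When $c=h$ we have $\lambda_1=0$, so $E_y$ is semi-hyperbolic and I would again invoke Theorem 7.1 of \cite{zhi2006qualitative}, exactly as in the previous proof. The steps are: translate $E_y$ to the origin by $(x,y)\mapsto(x,\,y-\delta m/\eta)$; apply the linear change of coordinates built from the eigenvectors of $J(E_y)$, namely the kernel direction $(\eta,\delta)$ and the $(0,1)$-direction of $\lambda_2$, to decouple the linear part into a zero block and a $-\delta$ block; and rescale time so as to normalize the nonzero eigenvalue, bringing the system to the standard form $\dot u=P_2(u,w)$, $\dot w=w+Q_2(u,w)$ to which Theorem 7.1 applies. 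Here some care is needed because $\lambda_2=-\delta<0$, rather than positive as for $E=(0,0)$; the required time rescaling, and the resulting orientation, is what ultimately fixes the stability label read off from Zhang's theorem, and keeping track of it is one of the delicate points.

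With the normal form in hand I would solve $w+Q_2(u,w)=0$ for the local center manifold $w=\phi(u)=O(u^2)$ and substitute to obtain $\psi(u)=P_2(u,\phi(u))=a_m u^m+o(u^m)$. I expect the quadratic coefficient $a_2$ to be a nonzero constant multiple of $c\delta m+c\eta-\eta$; when this quantity is nonzero, $m=2$ is even and Theorem 7.1 yields a saddle-node, with the parabolic sector lying in the right or left half-plane according to the sign of $a_2$, that is, of $c\delta m+c\eta-\eta$, giving the second item. In the degenerate subcase $c\delta m+c\eta-\eta=0$ the coefficient $a_2$ vanishes and one must push the expansion to third order; I expect $a_3$ to be a constant multiple of the expression $b\delta\eta m-\delta^2 m^2-2\delta\eta m-\delta\eta-\eta^2$. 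For this odd $m=3$, Theorem 7.1 then distinguishes node from saddle by the sign of $a_3$, producing the dichotomy in the third item.

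The main obstacle is the explicit third-order center-manifold computation needed for the degenerate subcase: one must expand the rational nonlinearities of \eqref{ec2} to cubic order in the decoupling coordinates, compute $\phi(u)$ through $O(u^2)$, and extract $a_3$, which is precisely the algebra that produces $b\delta\eta m-\delta^2 m^2-2\delta\eta m-\delta\eta-\eta^2$; this is error-prone and is best confirmed with a computer-algebra check. A secondary but genuine subtlety, as noted above, is the bookkeeping of the time orientation required to match the $\lambda_2=-\delta$ block to the $+w$ normal form of Theorem 7.1, since it is that orientation which determines the precise stability assignment of the node in the third item.
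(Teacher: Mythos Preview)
Your proposal is correct and follows essentially the same route as the paper: compute $J(E_y)$ to settle the hyperbolic case, then for $h=c$ translate $E_y$ to the origin, pass to the eigenbasis $(\eta,\delta)$, $(0,1)$ of $J(E_y)$, reverse time by $\tau=-\delta t$ to match the normal form of Theorem~7.1 in \cite{zhi2006qualitative}, and read off the quadratic and (in the degenerate subcase) cubic coefficients of $P_2(u,\phi(u))$, which indeed come out as constant multiples of $c\delta m+c\eta-\eta$ and $b\delta\eta m-\delta^{2}m^{2}-2\delta\eta m-\delta\eta-\eta^{2}$ respectively. Your flagged subtleties---the sign bookkeeping from the time reversal and the need to carry the expansion to third order when $c\delta m+c\eta-\eta=0$---are exactly the points the paper handles, and your suggestion to verify the cubic coefficient by computer algebra is well taken.
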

\begin{proof}
The Jacobian matrix at this point is given by
\begin{equation}
J(E_y)= \left( 
\begin{matrix}
1- \frac{h}{c} &0 \\ \frac{\delta^{2}}{\eta} & - \delta
\end{matrix}
\right).
\end{equation}
The polynomial is given by $P_{E_y}(\lambda)= (1-h/c- \lambda)(-\delta- \lambda)$, with roots $\lambda_1 = 1-h/c$ and $\lambda_2= - \delta<0$.  $\lambda_1<0$ when $c-h<0$ (stable node) and $\lambda_1>0$ for $c-h>0$ (saddle). Moreover, when $h=c$ we can make a change of coordinates $u=x, v=y+\delta m/ \eta  $, obtaining:
\begin{align}
\frac{du}{dt} &= u \left( 1-u \right) - \dfrac{{u}^{2} \left( v+{\frac {\delta\,m}{\eta}}
 \right)}{\left( a{u}^{2}+bu+1 \right)} -{\frac {hu}{c+u}}, \\
 \frac{dv}{dt} &= {\frac { \left( \delta\,m+\eta\,v \right)  \left( \delta\,u-\eta\,v
 \right) }{\eta\, \left( m+u \right) }}. \label{ec17}
\end{align}
Let $X=(u,v)$, and $F$ the right hand side of system above, then it can be written as $X'=F(X)= J(E_y)X+ G(X)$ with $G(X)=F(X)- J(E_y)X$.
We make the change of coordinates $u= \eta Y_1/ \delta$, $v= Y_1+Y_2$ to obtain:
\begin{align*}
Y_1'&= -{\frac {{{Y_1}}^{2} \left( c\delta\,m+c\eta-\eta \right) }{c\delta
}} +O(Y^3), \\
Y_2' &= -\delta\,{ Y_2}+{\frac {{{ Y_1}}^{2} \left( c\delta\,m+c\eta-\eta
 \right) }{c\delta}}-{\frac {\eta\,{{Y_2}}^{2}}{m}} + O(Y^3),
\end{align*}
where $O(Y^3)$ contains all the terms of the form $a_{ij} Y_1^{i}Y_2^{j}$, with $i+j \geq 3$. Making a change in time by $\tau= - \delta t$ and using $t$ instead of $\tau$ for simplicity, we have:

\begin{align}
Y_1' &= {\frac {{Y_{{1}}}^{2} \left( c\delta\,m+c\eta-\eta \right) }{{\delta}^
{2}c}}+  O(Y^3)=P_2(Y_1,Y_2), \nonumber \\
Y_2' &= Y_{{2}}+{\frac {\eta\,{Y_{{2}}}^{2}}{\delta\,m}}-{\frac {{Y_{{1}}}^{2}
c\delta\,{m}^{2}+{Y_{{1}}}^{2}c\eta\,m-{Y_{{1}}}^{2}\eta\,m}{{\delta}^
{2}cm}}+ O(Y^3)=Y_2+Q_2(Y_1,Y_2). \label{ec18}
\end{align} 
Note that the first terms of $P_2$ does not include $Y_2$, so if $\phi(Y_1)$ satisfies $\phi+Q_2(Y_1, \phi)=0$, then $$P_2(Y_1, \phi)= {\frac {{Y_{{1}}}^{2} \left( c\delta\,m+c\eta-\eta \right) }{{\delta}^
{2}c}}+  O(Y_1^3).$$
 Due to $m=2,$ if $c\delta\,m+c\eta-\eta\neq 0$ then  $E_y$ is a saddle node. \par 
 When $c\delta\,m+c\eta-\eta= 0$, then system \eqref{ec18} becomes:
 \begin{align*}
 Y_1' &= -{\frac { \left( b\delta\,\eta\,m-{\delta}^{2}{m}^{2}-2\,\delta\,\eta
\,m-\delta\,\eta-{\eta}^{2} \right) {Y_{{1}}}^{3}}{{\delta}^{3}}}+{
\frac {{Y_{{1}}}^{2}\eta\,Y_{{2}}}{{\delta}^{2}}} +O(Y^4)=P_3(Y_1,Y_2), \\
Y_2' &=Y_2+ \left( -{\frac {{\eta}^{2}Y_{{1}}}{{\delta}^{2}{m}^{2}}}+{\frac {\eta
}{\delta\,m}} \right) {Y_{{2}}}^{2}+ \left( -{\frac {{Y_{{1}}}^{2} Y_2\eta
}{{\delta}^{2}}} \right)\\
& +{\frac { \left( b\delta\,\eta\,{m}^
{3}-{\delta}^{2}{m}^{4}-2\,\delta\,\eta\,{m}^{3}-\delta\,\eta\,{m}^{2}
-{\eta}^{2}{m}^{2} \right) {Y_{{1}}}^{3}}{{m}^{2}{\delta}^{3}}}+O(Y^4),=Q_3(Y_1,Y_2). 
 \end{align*}
 Again, if $\phi=O(Y_1)^2$ then $P_3(Y_1, \phi)= -{\frac { \left( b\delta\,\eta\,m-{\delta}^{2}{m}^{2}-2\,\delta\,\eta
\,m-\delta\,\eta-{\eta}^{2} \right) {Y_{{1}}}^{3}}{{\delta}^{3}}}+ O(Y_1^4)$, so $E_y$ is an unstable node if $ b\delta\eta m-{\delta}^{2}{m}^{2}-2 \delta \eta
m-\delta \eta-{\eta}^{2}<0$ and a saddle if $ b\delta \eta m-{\delta}^{2}{m}^{2}-2 \delta\eta m-\delta \eta-{\eta}^{2}>0$.
  \end{proof}

\begin{theorem}
Whenever $E^\pm$ exists and its component $x^+ $ ($x^-$) is positive, then it is unstable
\end{theorem}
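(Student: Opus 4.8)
The plan is to linearize system \eqref{ec2} at $E^\pm=(x^\pm,0)$ and to read off the eigenvalues of the Jacobian directly, without needing their explicit values. First I would write the vector field as $\dot{x}=f(x,y)$, $\dot{y}=g(x,y)$ with
$$f(x,y)=x(1-x)-\frac{x^2y}{ax^2+bx+1}-\frac{hx}{c+x}, \qquad g(x,y)=y\left(\delta-\frac{\eta y}{m+x}\right),$$
and compute the four partial derivatives. The decisive structural feature is that every $y$-dependent term of $f$ carries a factor $y$, and that $g_x=\eta y^2/(m+x)^2$ carries a factor $y^2$; hence at a predator-free point, where $y=0$, we have $g_x(x^\pm,0)=0$. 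Consequently the Jacobian at $E^\pm$ is upper triangular:
$$J(E^\pm)=\begin{pmatrix} f_x(x^\pm,0) & f_y(x^\pm,0)\\ 0 & \delta \end{pmatrix}.$$

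Since the matrix is triangular, its eigenvalues are exactly the diagonal entries $\lambda_1=f_x(x^\pm,0)$ and $\lambda_2=\delta$. Because $\delta=s/r>0$ by the standing hypotheses on the parameters, one eigenvalue is always strictly positive. Therefore $E^\pm$ always possesses an unstable eigendirection (the predator direction, along the $y$-axis), and the equilibrium is unstable regardless of the sign of $f_x(x^\pm,0)$: it is a saddle when $f_x(x^\pm,0)<0$ and an unstable node when $f_x(x^\pm,0)>0$.

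There is essentially no hard step here; the only point requiring care is verifying that the off-diagonal entry $g_x$ vanishes at $y=0$, which is what makes the matrix triangular and lets me conclude instability from the single sign $\delta>0$ alone, without ever evaluating $f_x$ at $x^\pm$. The positivity hypothesis on $x^\pm$ is used only to guarantee that $E^\pm$ is a genuine equilibrium lying on the boundary of the biologically relevant first quadrant; it plays no role in the eigenvalue argument itself.
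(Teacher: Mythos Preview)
Your proof is correct and follows essentially the same approach as the paper: both compute the Jacobian at $E^\pm$, observe that it is upper triangular (the paper simply displays the matrix, while you explain the structural reason $g_x\vert_{y=0}=0$), and conclude instability from the eigenvalue $\lambda=\delta>0$ on the diagonal. Your write-up is in fact a bit more explicit than the paper's about why triangularity holds and why the sign of the other diagonal entry is irrelevant.
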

\begin{proof}
The Jacobian matrix for this case is $$ J(E^{\pm}) = \left( \begin{matrix}
1-2x^\pm - \frac{hc}{(c+x^{\pm})^{2}} & - \frac{(x^{\pm})^{2}}{a (x^{\pm})^2+bx^{\pm}+1} \\ 0 & \delta
\end{matrix}
\right), $$
with eigenvalues $\lambda= \delta>0, $ and $\lambda_2=1- 2x^{\pm}- \frac{hc}{(c+x)^2}$. So $E^{\pm}$ is always unstable.
\end{proof}

Due to the multiple cases that we have for the  existence of interior equilibria points, the analysis of stability via linearization of each one, will be extensive and complicated.
In further sections we will not focus our attention in stability analysis of all interior equilibria points in the $K_i$'s, instead of, our goal is to find the critical values of parameters that let the model to present bifurcations, and then, make an analysis  for  parameters near to critical point, in order to obtain a view of the phase plane of system around them.

\section{Bifurcation analysis}

	\subsection{Hopf bifurcation}
	
In previous section we have seen the existence of multiple equilibria points. One of the cases of interest is the existence of a Hopf bifurcation, this happens when an equilibrium   changes its stability letting the existence of a limit cycle around it. The Hopf bifurcation occurs when the Jacobian matrix has at an equilibrium $E^{*}$,  a pair of pure imaginary eigenvalues, ie, $Tr(J(E^*))=0$ and $\det (J(E^{*}))>0$. \par 

Let $E^{*}=(x^{*},y^{*})$ an equilibrium of system,  then its Jacobian matrix is given by

\begin{equation}
J(E^{*}) = \left( 
\begin{matrix}
\alpha_{10} & \alpha_{01} \\ \beta_{10} & \beta_{01}
\end{matrix}
\right), \label{ec10} \end{equation}
where:
\begin{align*}
\alpha_{10} &= 1-2x^{*}- \dfrac{x^{*}y^{*}(bx^{*}+2)}{(a(x^{*})^{2} + bx^{*}+1)^{2}} - \dfrac{hc}{(c+x^{*})^{2}} , \\
\alpha_{01} &=  -\dfrac{(x^{*})^{2}}{a(x^{*})^{2}+bx^{*}+1}, \\
\beta_{10} &= \dfrac{\delta^{2}}{\eta}, \\
\beta_{01} &= - \delta. 
\end{align*}

To obtain a pair of pure imaginary eigenvalues of $J(E^{*})$ we ask for
\begin{align}
\delta = 1-2x^{*}- \dfrac{x^{*}y^{*}(bx^{*}+2)}{(a(x^{*})^{2} + bx^{*}+1)^{2}} - \dfrac{hc}{(c+x^{*})^{2}} &=: \delta^{H},\\
-( \delta^{H})^2+ \dfrac{(\delta^H)^2 x^2}{\eta(ax^2+bx+1)} &>0.
\end{align}
To ensure the existence of Hopf bifurcation we need to verify the no-degenerate condition 
\begin{equation*}
\frac{d(Tr(E^{*}))}{d(\delta)} \mid_{\delta=\delta^{H}}  =-1 \neq0. 
\end{equation*}

In order to discuss the stability of the limit cycle, we use a change of coordinates $u=x-x^{*}, v=v-v^{*}$ to transform system \eqref{ec2} into
\begin{align*}
\frac{du}{dt} &=  \left( u+{x^{*}} \right)  \left( 1-u-x^{*} \right) -{\frac {
 \left( u+x^{*} \right) ^{2} \left( v+y^{*} \right) }{a \left( u
+x^{*} \right) ^{2}+b \left( u+x^{*} \right) +1}}-{\frac {h
 \left( u+x^{*} \right) }{c+u+x^{*}}}, \\
 \frac{dv}{dt} &= \left( v+y^{*} \right)  \left( \delta-{\frac {\eta\, \left( v+y^{*} \right) }{m+u+x^{*}}} \right). 
\end{align*}
Using the Taylor expansion around $(0,0)$, then system above is rewritten as:
\begin{align}
\frac{du}{dt} &= \alpha_{10} u + \alpha_{01} v+ \alpha_{20}u^{2}+ \alpha_{11}uv+ \alpha_{30} u^{3}+ \alpha_{21} u^{2}v + Q_1(x,y), \nonumber \\
\frac{dv}{dt} &=  \beta_{10}u+ \beta_{01}v+ \beta_{20}u^{2}+ \beta_{11}uv+ \beta_{02}v^{2}+ \beta_{30}u^{3}+ \beta_{21}u^{2}v+ \beta_{12}uv^{2}+ Q_2(x,y), \label{ec19}
\end{align} 
where $\alpha_{10},  \alpha_{01}, \beta_{10}, \beta_{01}$ are given by the Jacobian matrix $J(E^{*})$ in \eqref{ec10}, $Q_1, Q_2$ are polinomials in $x^{i},y^{j}$ with $i+j \geq 4$ and 
\begin{align*}
\alpha_{20} &= -1+ \dfrac{y^{*}(ab(x^{*})^3+3a(x^{*})^2-1)}{(a(x^{*})^2+bx^{*}+1)^3} + \dfrac{hc}{(c+x^{*})^3}, \\
\alpha_{11} &= - \dfrac{x^{*}(bx^{*}+2)}{(a(x^{*})^2+bx^{*}+1)^2}, \\
\alpha_{30} &= - \dfrac{hc}{(c+x^{*})^4}- \dfrac{y^{*}(a(x^{*})^2-1)(ab(x^{*})^2+4ax^{*}+b)}{(a(x^{*})^2+bx^{*}+1)^4}, \\
\alpha_{21} &= \dfrac{ab(x^{*})^3+3a(x^{*})^2-1}{(a(x^{*})^2+bx^{*}+1)^3}, \\
\beta_{20} &= - \dfrac{\delta^2}{\eta(m+x^{*})}, \quad \beta_{11} = \dfrac{2 \delta}{m+x^{*}}, \quad \beta_{02} = - \dfrac{\eta}{m+x^{*}},\\
 \quad \beta_{30} &= \dfrac{\delta^2}{\eta(m+x^{*})^2}, \quad
\beta_{21} = - \dfrac{2 \delta}{(m+x^{*})^2}, \quad \beta_{12} =  \dfrac{\eta}{(m+x^{*})^2}.
\end{align*}
Therefore, using matrix notation, system \eqref{ec19} can be expressed as:
\begin{equation}
\left( \begin{matrix}
\frac{du}{dt} \\ \frac{dv}{dt}
\end{matrix}  \right) = J(E^{*}) \left(  \begin{matrix}
u \\v
\end{matrix}  \right) + G(u,v),
\end{equation}
with $G = \left( \begin{matrix}
\alpha_{20}u^{2}+ \alpha_{11}uv+ \alpha_{30} u^{3}+ \alpha_{21} u^{2}v + Q_1(x,y) \\
\beta_{20}u^{2}+ \beta_{11}uv+ \beta_{02}v^{2}+ \beta_{30}u^{3}+ \beta_{21}u^{2}v+ \beta_{12}uv^{2}+ Q_2(x,y)
\end{matrix} \right) .$ \par 
At $ \delta= \delta^{H} $, matrix $J(E^{*})$ has a pair of pure imaginary eigenvalues, so $\alpha_{10}= \beta_{01} $. Let $\omega= \sqrt{\det(J(E^{*}))}>0$, we make the change of coordinates $ u=Y_2, v=\omega Y_1- \frac{\delta}{a_{12}} Y_2 $ obtaining the following equivalent system:
\begin{align*}
\left( \begin{matrix}
\frac{dY_1}{dt} \\ \frac{dY_2}{dt}
\end{matrix} \right) = \left(  \begin{matrix}
0 & - \omega \\ \omega & 0
\end{matrix} \right)+ \left( \begin{matrix}
f(Y_1,Y_2)+Q_3 \\ g(Y_1, Y_2)+Q_4
\end{matrix} \right),
\end{align*}
with $Q_3, Q_4$ functions in $Y_1^{i} Y_2^{j}$ for $i+j \geq 4$ and
\begin{align*}
f &= \left( - \frac{\alpha_{21} \delta}{\alpha_{01}}+ \alpha_{30}  \right)Y_{2}^{3}+ \frac{\alpha_{21}Y_1 Y_2^{2}}{\alpha_{01}}  + \left( \alpha_{20}- \frac{\alpha_{11} \delta}{\alpha_{01}} \right)Y_2^{2}+ \frac{\alpha_{11} \omega Y_1 Y_2}{\alpha_{01}}\\
g &= \left( - \frac{\beta_{21} \delta}{\alpha_{01}} + \frac{\beta_{12} \delta^{2}}{\alpha_{01}^{2}}+ \beta_{30} \right) Y_2^{3}+ \left( \frac{\beta_{21} \omega}{\alpha_{01}}- \frac{2 \beta_{12} \delta \omega}{\alpha_{01}^2}  \right) Y_1 Y_2^2\\
& + \left( -\frac{\beta_{11}}{\alpha_{01}} + \beta_{20}+ \frac{\beta_{02} \delta^{2}}{ \alpha_{01}^2} \right) Y_2^2+ \frac{\beta_{12} \omega^2 Y_1^2 Y_2}{\alpha_{01}^2} + \left( \frac{\beta_{11} \omega}{\alpha_{01}}- \frac{2 \beta_{02} \omega \delta}{\alpha_{01}^2} \right)Y_1 Y_2 \\
&+  \frac{\beta_{02} \omega_{2} Y_1^2}{\alpha_{01}^2}.
\end{align*}
Using theorem (3.4.2) from \cite{guckenheimer2013nonlinear}, we define the following coefficient:
\begin{align}
 l&:= \frac{\alpha_{21} \omega}{ 8 \alpha_{01 }}+ \frac{\beta_{12} \omega^{2}}{8 \alpha_{01}^2}- \frac{3 \beta_{21} \delta}{8 \alpha_{01}}+ \frac{3 \beta_{12} \delta^{2} }{8 \alpha_{01}^2}+ \frac{3 \beta_{30}}{8} + \nonumber \\
&+\frac{1}{16\omega} \left(  
\frac{\alpha_{11} \omega  }{\alpha_{01}} \left( 2 \alpha_{20}- \frac{2 \alpha_{11} \delta}{\alpha_{01}} \right)- \left( \frac{\beta_{11} \omega}{\alpha_{01}}- \frac{2 \beta_{02} \delta \omega}{\alpha_{01}^2} \right) \left( \frac{2 \beta_{02} \omega^2 }{\alpha_{01}^2}- \frac{2 \beta_{11} \delta}{\alpha_{01}}+ 2 \beta_{20}+ \frac{2 \beta_{02} \delta^{2}}{\alpha_{01}^2} \right)
  \right) + \nonumber \\
 & + \frac{1}{16 \omega} \left(
  \left( 2 \alpha_{20}- \frac{2 \alpha_{11} \delta}{\alpha_{01}} \right) \left( - \frac{2 \beta_{11} \delta}{\alpha_{01}} + 2 \beta_{20} + \frac{2 \beta_{02} \delta^{2}}{\alpha_{01}^2} \right)   \right). \label{ec20} 
\end{align}
\begin{theorem}
Suppose system \eqref{ec2} has an interior equilibrium point $E^{*}$ which satisfies $\delta= \delta^{H}$ and $-( \delta^{H})^2+ \dfrac{(\delta^H)^2 x^2}{\eta(ax^2+bx+1)} >0  $. Assume $l \neq 0$, with $l$ defined in \eqref{ec20},  then system undergoes a Hopf bifurcation around $E^{*}$, which implies the existence of periodic solutions around $E^{*}$. Moreover, the periodic solutions are stable cycles if $l>0$, and repelling if $l<0$.
\end{theorem}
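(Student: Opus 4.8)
The plan is to invoke the planar Hopf bifurcation theorem (Theorem 3.4.2 of \cite{guckenheimer2013nonlinear}), whose hypotheses are three: at the critical parameter value the linearization $J(E^{*})$ has a pair of purely imaginary eigenvalues $\pm i\omega$ with $\omega\neq 0$; these eigenvalues cross the imaginary axis transversally as the bifurcation parameter $\delta$ varies; and the first Lyapunov (focal) coefficient is nonzero, its sign deciding the stability of the bifurcating cycle. I would verify these three in turn, taking $\delta$ as the bifurcation parameter.

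For the eigenvalue condition, note that $\delta^{H}$ is defined precisely so that $\alpha_{10}=\delta$; hence at $\delta=\delta^{H}$ we have $\mathrm{Tr}\,J(E^{*})=\alpha_{10}+\beta_{01}=\delta-\delta=0$, while
\[
\det J(E^{*})=\alpha_{10}\beta_{01}-\alpha_{01}\beta_{10}=-(\delta^{H})^{2}+\frac{(\delta^{H})^{2}x^{2}}{\eta(ax^{2}+bx+1)}>0
\]
by hypothesis. Consequently the spectrum is the pure imaginary pair $\pm i\omega$ with $\omega=\sqrt{\det J(E^{*})}$, as used in the reduction. Transversality is the condition already recorded in the excerpt: differentiating $\mathrm{Tr}\,J(E^{*})=\alpha_{10}-\delta$ with respect to the bifurcation parameter gives $\frac{d}{d\delta}\mathrm{Tr}\,J(E^{*})\big|_{\delta=\delta^{H}}=-1\neq 0$, so the real part of the eigenvalues changes sign at nonzero speed as $\delta$ crosses $\delta^{H}$.

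The substantive step is the first Lyapunov coefficient. I would translate the equilibrium to the origin by $u=x-x^{*}$, $v=y-y^{*}$ and Taylor-expand the field to third order, obtaining the coefficients $\alpha_{ij},\beta_{ij}$ listed above; then, at $\delta=\delta^{H}$, apply the linear change $u=Y_{2}$, $v=\omega Y_{1}-\frac{\delta}{\alpha_{01}}Y_{2}$ to put the linear part in the rotational form $\left(\begin{smallmatrix}0&-\omega\\\omega&0\end{smallmatrix}\right)$, producing the nonlinear parts $f(Y_{1},Y_{2})$ and $g(Y_{1},Y_{2})$ displayed above. Finally I would insert the second- and third-order partial derivatives of $f$ and $g$ into the explicit focal-value formula of Theorem 3.4.2 and collect terms, which reproduces the quantity $l$ in \eqref{ec20}. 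By that theorem, $l\neq 0$ makes the bifurcation nondegenerate, a family of periodic orbits emanates from $E^{*}$ at $\delta=\delta^{H}$, and the cycle is attracting (supercritical) when $l>0$ and repelling (subcritical) when $l<0$.

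I expect the only real obstacle to be bookkeeping. The eigenvalue and transversality checks are immediate, but the coordinate change feeds a large number of quadratic and cubic monomials into the focal-value formula, so the genuine risk is an algebraic slip while condensing them into the compact expression \eqref{ec20}; one must also keep careful track of the orientation of the rotation so that the stated sign convention ($l>0$ stable) comes out correctly. Conceptually, nothing beyond verifying the three standard Hopf hypotheses is required, since all the heavy symbolic work has already been organized in the preceding reduction.
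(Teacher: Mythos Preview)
Your proposal is correct and follows essentially the same route as the paper: the paper's argument is precisely the derivation preceding the theorem statement, namely checking $\mathrm{Tr}\,J(E^{*})=0$ and $\det J(E^{*})>0$ at $\delta=\delta^{H}$, verifying transversality via $\frac{d}{d\delta}\mathrm{Tr}\,J(E^{*})=-1$, translating to the origin, Taylor-expanding, applying the linear change $u=Y_{2}$, $v=\omega Y_{1}-\tfrac{\delta}{\alpha_{01}}Y_{2}$ to bring the linear part to rotational form, and then reading off $l$ from the Guckenheimer--Holmes focal-value formula. Your caution about the sign convention for $l$ is well placed, since that is the only point where the orientation chosen in the coordinate change can silently flip the conclusion.
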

 			
	\subsection{Bogdanov-Takens bifurcation}
	
	When at some vales of the parameters, say $\alpha=(\alpha_1, \alpha_2)$ there exists an equilibrium with two zero eigenvalues (the \textit{Bogdanov-Takens condition}), then for nearby  values of $(\alpha_1, \alpha_2)$ we can expect the appearance of new phase portraits of the system, implying that the Bogdanov-Takens bifurcation of 	codimension two, has occurred. The Bogdanov-Takens condition is equivalent to $Tr(J(E^*))= \det(J(E ^*))=0$, for an equilibrium $E^{*}$. In this section we compute the  Bogdanov-Takens condition in terms of two parameters of the model: $h$ and $ \delta$. Then, we develop the normal form of this bifurcation, computing the non-degeneracy conditions, following the steps given by \cite{kuznetsov2013elements}. Finally we give some examples to sketch the bifurcations curves (using the theoretical results obtained ) and the phase portraits of solutions, in terms of parameters for nearby values.  \par 

\subsubsection{Existence of equilibria points with  double zero eigenvalues}

From section 3.2, none of the trivial equilibria points satisfy the Bogdanov-Takens condition, so we focus our study in interior equilibria points. \par 
From previous section, an interior equilibria point $E^{*}=(x^{*},y^{*}), x^{*} \neq 0 \neq y^{*}$, has a Jacobian matrix  given by \eqref{ec10} and its  characteristic polynomial  is 
$$  \lambda ^{2}- \text{Trace}(J(E^{*})) \lambda  + \det(J(E^{*}))=0. $$
The determinant and trace can be simplified as:
\begin{equation}
\begin{aligned}
\det (J(E^{*}))& = \delta \left( -1 + \dfrac{hc}{(c+x^{*})^{2}}+2x^{*} + \dfrac{x^{*} y^{*}(bx^{*}+2)}{(a(x^{*})^{2} + bx^{*}+1)^{2}} + \dfrac{\delta (x^{*})^{2}}{ \eta (a(x^{*})^{2} + bx^{*}+1) } \right) \\
\text{trace}(J(E^{*})) &= 1-2x^{*}- \dfrac{x^{*}y^{*}(bx^{*}+2)}{(a(x^{*})^{2} + bx^{*}+1)^{2}} - \dfrac{hc}{(c+x^{*})^{2}}- \delta.
\end{aligned}
\end{equation}

For simplicity, we omit the $^{*}$ and refer to an interior equilibrium as $E=(x,y)$. If $E$ has two zero eigenvalues, then $\text{trace}(J(E))=0= \det(J(E))$, so we have following system of algebraic equations: 
\begin{equation}
\begin{aligned}
 -1 + \dfrac{hc}{(c+x)^{2}}+2x + \dfrac{x y(bx+2)}{(ax^{2} + bx+1)^{2}} + \dfrac{\delta x^{2}}{ \eta (ax^{2} + bx+1) } &=0, \\
 1-2x- \dfrac{xy(bx+2)}{(ax^{2} + bx+1)^{2}} - \dfrac{hc}{(c+x)^{2}}- \delta &=0. \label{BT1}
\end{aligned}
\end{equation}
Adding both equations in \eqref{BT1}, we obtain 

\begin{equation}
 \dfrac{\delta x^{2}}{ \eta (ax^{2} + bx+1) }- \delta =0,
\end{equation}
or equivalently, 
\begin{equation}
x^{2}(a \eta-1)+b \eta x+ \eta =0. \label{BT2}
\end{equation}
And from second equation of \eqref{BT1} we have that
\begin{equation}
y= \dfrac{(ax^{2} + bx+1)^{2}}{x(bx+2)} \left(1-2x - \dfrac{hc}{(c+x)^{2}}- \delta  \right). \label{BT3}
\end{equation}

\begin{lemma}
System \eqref{BT1} is equivalent to system \eqref{BT2}-\eqref{BT3} in sense that both systems have the same solutions. \label{BTlem1}
\end{lemma}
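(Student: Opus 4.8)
The plan is to prove equivalence of the two solution sets by two inclusions, exploiting the fact that the passage between \eqref{BT1} and \eqref{BT2}--\eqref{BT3} is nothing more than an invertible linear recombination of the two defining equations followed by reversible algebraic rearrangements. Throughout I would invoke the standing non-degeneracy conditions $\delta \neq 0$, $x \neq 0$ and $bx+2 \neq 0$: the first two hold for an interior equilibrium (since $\delta$ is a positive parameter and we require $x^{*} \neq 0$), while $ax^2+bx+1 > 0$ for $x \geq 0$ guarantees every relevant denominator is nonzero. These are exactly the hypotheses that make the clearing-of-denominators steps legitimate.

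First I would establish the forward direction. Suppose $(x,y)$ solves \eqref{BT1}. Adding the two equations cancels the four matching pairs $\pm 2x$, $\pm hc/(c+x)^2$, $\pm\, xy(bx+2)/(ax^2+bx+1)^2$ and $\mp 1$, leaving exactly $\delta x^2/[\eta(ax^2+bx+1)] - \delta = 0$. Since $ax^2+bx+1>0$ and $\eta,\delta>0$, I may clear the denominator and divide by $\delta$ to get $x^2 = \eta(ax^2+bx+1)$, which rearranges termwise to \eqref{BT2}. The second equation of \eqref{BT1}, after isolating the $y$-term and dividing by $x(bx+2)$, is precisely \eqref{BT3}. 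Hence $(x,y)$ satisfies \eqref{BT2}--\eqref{BT3}.

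For the reverse direction, suppose $(x,y)$ satisfies \eqref{BT2}--\eqref{BT3}. Reversing the algebra above shows \eqref{BT2} is equivalent to $\delta x^2/[\eta(ax^2+bx+1)] - \delta = 0$, i.e. to the \emph{sum} of the two equations in \eqref{BT1}; and multiplying \eqref{BT3} by $x(bx+2)/(ax^2+bx+1)^2$ and rearranging recovers the second equation of \eqref{BT1}. Writing the first equation of \eqref{BT1} as (their sum) minus (the second equation)---a relation valid identically---and noting that both now vanish, I conclude the first equation of \eqref{BT1} holds as well. Thus $(x,y)$ solves \eqref{BT1}, completing the equivalence.

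The underlying structure is simply that the map $(E_1,E_2)\mapsto(E_1+E_2,\,E_2)$ on the pair of defining functions is an invertible (elementary) linear transformation, so the two systems have identical zero sets. Consequently the only point requiring genuine care---and the main obstacle---is the bookkeeping around denominators: verifying that no spurious solutions are introduced nor genuine ones discarded when one clears $\eta(ax^2+bx+1)$ and divides by $\delta$ and by $x(bx+2)$. In particular I would emphasize that $bx+2\neq 0$ must be assumed, since on the locus $bx+2=0$ the $y$-dependent term drops out of the second equation of \eqref{BT1} and \eqref{BT3} ceases to be a valid solved form for $y$; away from that locus, all divisions are reversible and the equivalence is exact.
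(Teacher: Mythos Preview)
Your proposal is correct and follows the same approach as the paper, which in fact only sketches the forward direction in the text preceding the lemma (adding the two equations of \eqref{BT1} to obtain \eqref{BT2}, and isolating $y$ in the second to get \eqref{BT3}) and states the lemma itself without proof. Your write-up is actually more complete: you supply the reverse inclusion explicitly and flag the non-degeneracy condition $bx+2\neq 0$ needed to solve for $y$, points the paper leaves implicit.
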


Using lemma \eqref{BTlem1}, we analyse the solutions of \eqref{BT2}-\eqref{BT3}. From \eqref{BT2} we can obtain one or two possible values for $x$ (not necessarily positive), and each value of $x$ has a single value of $y$ associated, by the relationship \eqref{BT3}. So, we can have at most, two possible points where BT bifurcation can occur, $(x_1,y_1)$ and $(x_2,y_2)$, where $x_i$ is a root of \eqref{BT2} and $y_i$ is the respective substitution in \eqref{BT3}.
The sign of $x_1,x_2$ depend on the sign of $a \eta-1$, so we analyse three possible cases : $a \eta <1, a \eta>1$ and $a \eta=1$.
\subsubsection{Case $a \eta=1$}

The easiest case is when \textbf{$a \eta=1$}. If $a \eta=1$, the quadratic equation \eqref{BT2} is simplified to a linear one with root \textbf{$x_1=-1/b$}, which is positive iff \textbf{$b<0$}; its respective value of $y$ in \eqref{BT3}  is:

\begin{equation}
y_1 = {\frac {{b}^{3}{c}^{2}\delta-{b}^{3}{c}^{2}+{b}^{3}ch-2\,{b}^{2}{c}^{2
}-2\,{b}^{2}c\delta+2\,{b}^{2}c+4\,bc+b\delta-b-2}{{b}^{4} \left( bc-1
 \right) ^{2}{\eta}^{2}}}. \label{BT4}
\end{equation}
Let $ \alpha= b \delta-b-2 $, then 
\begin{equation}
y_1 = \dfrac{\alpha b^{2}c^{2}+b(b^{2}h-2 \alpha)c+\alpha}{b^{4}(bc-1)^{2}\eta^{2}}. \label{BT5}
\end{equation}
$E_1=(x_1,y_1)$ satisfies $ \text{trace}(J(E_1))=0=\det(E_1) $, but $E_1$ is not necessary an equilibrium, so we ask that $E_1$ satisfy the equations for equilibria points, ie, 

\begin{equation}
\begin{aligned}
1-x- \frac{xy}{ax^{2}+bx+1}- \frac{h}{c+x} &=0, \\
\delta- \frac{\eta y }{m+x} &=0.
\end{aligned}
\end{equation}
 Substituting the value for $a, x_1,y_1$ and simplifying,

\begin{align*}
&1+{\frac {\alpha\,{b}^{2}{c}^{2}+b \left( {b}^{2}h-2\,\alpha \right) c
+\alpha}{{b}^{3} \left( bc-1 \right) ^{2}\eta}}-{\frac {h}{c}}=0, \\
& \delta- \dfrac{\alpha\,{b}^{2}{c}^{2}+b \left( {b}^{2}h-2\,\alpha \right) c+\alpha}{\eta\,{b}^{4} \left( bc-1 \right) ^{2} \left(m- \frac{1}{b} \right)}=0.
\end{align*}
 
From previous equations we can obtain expressions for $h$ and $ \delta $

\begin{align}
h &= {\frac { \left( {b}^{3}\eta+{b}^{2}\eta+b\delta-b-2 \right)  \left( bc
-1 \right) ^{2}}{{b}^{3} \left( {b}^{2}c\eta-b\eta-c \right) }}
=: h_1 , \label{BT8}\\
\delta &= -{\frac {bc-b-2}{b \left( {b}^{4}c\eta\,m-{b}^{3}c\eta-{b}^{3}\eta\,m-
{b}^{2}cm+{b}^{2}\eta+1 \right) }}
\nonumber \\
&=: \delta_1 . \label{BT9} 
\end{align}
Note that the values $\delta_1, h_1$, $y_1$ satisfy the equilibria equations, so $y_1= \delta_1(m- \frac{1}{b} )/\eta$ and it is positive for $b<0$. 
Using the previous results we can enunciate the following:

\begin{theorem}
Let $c, \eta, m>0$, $a= \frac{1}{\eta}$ and $b>-2 \sqrt{a}$.  If $h=h_1, \delta=\delta_1$, where $h_1, \delta_1$ are given by \eqref{BT8}, \eqref{BT9} (whenever they are positive) , then the system has an equilibrium at $(-\frac{1}{b},y_1)$  with a double zero eigenvalue. Moreover, if  $b<0$ then the equilibrium is positive. \label{BTteo1}
\end{theorem}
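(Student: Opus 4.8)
The plan is to prove the statement by direct construction: I will locate the unique candidate point forced by the case hypothesis, confirm that it meets the double-zero-eigenvalue requirement, and then determine the two parameters $h,\delta$ that additionally turn it into a genuine equilibrium. The organizing fact is Lemma \eqref{BTlem1}: at an interior equilibrium, the condition $\mathrm{trace}(J(E))=\det(J(E))=0$ is equivalent to the pair \eqref{BT2}--\eqref{BT3}. Since the Jacobian entries $\beta_{10}=\delta^2/\eta$ and $\beta_{01}=-\delta$ already encode the predator equilibrium relation $y=\delta(m+x)/\eta$, working with \eqref{BT2}--\eqref{BT3} is legitimate once we insist that the point be an equilibrium.

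First I would set $a=1/\eta$, which collapses the quadratic \eqref{BT2} into the linear equation $b\eta x+\eta=0$; its unique solution is $x_1=-1/b$, so any interior equilibrium with a double zero eigenvalue must carry this $x$-coordinate, and $x_1>0$ exactly when $b<0$. Substituting $a=1/\eta$ and $x=x_1$ into the trace relation \eqref{BT3} produces the value $y_1$ of \eqref{BT4}, whose compact form \eqref{BT5} (with $\alpha=b\delta-b-2$) I would adopt. At this stage $(x_1,y_1)$ satisfies $\mathrm{trace}(J)=\det(J)=0$ by construction, but the expression for $y_1$ still contains the undetermined parameters $h$ and $\delta$, so it is not yet an equilibrium.

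Next I would impose the two equilibrium equations on $(x_1,y_1)$, namely $1-x_1-\dfrac{x_1y_1}{ax_1^2+bx_1+1}-\dfrac{h}{c+x_1}=0$ and $\delta-\dfrac{\eta y_1}{m+x_1}=0$. Because the second equation demands $y_1=\delta(m+x_1)/\eta$ while \eqref{BT3} supplies a competing expression for $y_1$ in terms of $h$ and $\delta$, equating the two and adjoining the prey equation yields two scalar relations that are linear in $(h,\delta)$. Solving this linear pair gives precisely the closed forms $h=h_1$ of \eqref{BT8} and $\delta=\delta_1$ of \eqref{BT9}. With these choices the point $(-1/b,y_1)$ is simultaneously an equilibrium and a solution of \eqref{BT2}--\eqref{BT3}, so its characteristic polynomial is $\lambda^2=0$ and the eigenvalue $0$ is double. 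For the positivity claim I would invoke the consistency identity $y_1=\delta_1(m-1/b)/\eta$ recorded after \eqref{BT9}: when $b<0$ we have $x_1>0$ and $m-1/b>0$, and with $\delta_1,\eta>0$ this forces $y_1>0$.

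I expect the only real difficulty to be bookkeeping rather than ideas: one must verify that the bulky rational expressions close consistently, i.e.\ that the $y_1$ dictated by the trace condition \eqref{BT3} actually equals the equilibrium value $\delta_1(m+x_1)/\eta$ after $h_1,\delta_1$ are substituted. The mild circularity---$y_1$ depends on $h,\delta$, which are themselves being solved for---means the verification hinges on the nonvanishing of the denominators appearing in \eqref{BT8}--\eqref{BT9} (in particular $b^2c\eta-b\eta-c\neq0$ and the bracketed factor in $\delta_1$), together with the standing assumptions $c,\eta,m>0$ and $b>-2\sqrt{a}$ guaranteeing $ax^2+bx+1>0$. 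Granting these nondegeneracy conditions and the hypothesis $h_1,\delta_1>0$, the remaining computation is routine and the construction is complete.
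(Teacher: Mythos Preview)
Your proposal is correct and mirrors the paper's approach exactly: the paper's proof reads simply ``Proof follows from previous analysis,'' and that analysis is precisely the construction you outline---collapse \eqref{BT2} to $x_1=-1/b$ under $a\eta=1$, read off $y_1$ from \eqref{BT3}, impose the two equilibrium equations to solve the resulting linear system for $(h,\delta)=(h_1,\delta_1)$, and deduce positivity from $y_1=\delta_1(m-1/b)/\eta$. There is nothing to add.
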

\begin{proof}
Proof follows from previous analysis.
\end{proof} 
\subsubsection{Case $a \eta<1$}
When $a \eta<1$, equation \eqref{BT2} can be rewritten as 
\begin{equation}
x^{2}+ \frac{b \eta}{a \eta-1}+ \frac{\eta}{a \eta-1}=0. \label{BT7}
\end{equation}
The above equation has two real roots with different sign, say $x_1,x_2$, moreover the positive root is given by
\begin{equation}
x_2= \frac{1}{2(a \eta-1)} (-b \eta - \sqrt{b^{2} \eta^{2}-4(a \eta-1) \eta} ). \label{BT10}
\end{equation}
From \eqref{BT3} the value for $y$ is:
\begin{equation}
y_2= \dfrac{(ax_2^{2} + bx_2+1)^{2}}{x_2(bx_2+2)} \left(1-2x_2 - \dfrac{hc}{(c+x_2)^{2}}- \delta  \right).
\end{equation}
Substituting $x_2$ and $y_2$ in equilibria equations we arrive to the following equations:
\begin{align*}
\delta &= a_1(x_2)+a_2(x_2)h, \\
h &= b_1(x_2)+b_2(x_2) \delta,
\end{align*}
where
\begin{align}
a_1(x_2) &= \dfrac{\eta (a x_2^{2}+b x_2 +1)^{2} \left( 1-2x_2 \right)}{x_2(bx_2+2)(m+x_2)+ \eta (a x_2^{2}+b x_2+1)^{2}}, \nonumber  \\
a_2(x_2) &=  \dfrac{\eta (a x_2^{2}+b x_2 +1)^{2}}{x_2(bx_2+2)(m+x_2)+ \eta (a x_2^{2}+b x_2+1)^{2}} \left( - \frac{c}{(c+x_2)^{2}} \right), \label{BT11} \\
b_1(x_2)&=\frac{(c+x_2)^{2} \left( (1-x_2)(bx_2+2)-(ax_2^{2}+bx_2+1)(1- 2x_2) \right) }{(bx_2+2)(c+x_2)-c(ax_2^{2}+bx_2+1)}, \nonumber \\
b_2 (x_2)&=  \frac{(c+x_2)^{2}(ax_2^2+bx_2+1)}{(bx_2+2)(c+x_2)-c(ax_2^{2}+bx_2+1)} \nonumber.
\end{align}

Solving previous equations we have 
\begin{equation}
h = \frac{b_1(x_2)+b_2(x_2) a_1(x_2) }{1-b_2(x_2) a_2(x_2)}, \quad \delta =a_1(x_2)+a_2(x_2) \left( \frac{b_1(x_2)+b_2(x_2) a_1(x_2) }{1-b_2(x_2) a_2(x_2)} \right).
\end{equation}
 
As in previous case, if $y_2$ satisfies the equilibria equation \eqref{ec6}, and then it is positive when $x_2$ is positive. 
\begin{theorem}
Let $c, \eta, m>0$, $0<a< \frac{1}{\eta}$, $b>-2 \sqrt{a}$ and $x_2$ as \eqref{BT10}. Set $a_1,a_2,b_1,b_2$ as \eqref{BT11}. If
\begin{equation}
h = \frac{b_1+b_2 a_1 }{1-b_2 a_2}, \quad \delta =a_1+a_2 \left( \frac{b_1+b_2 a_1 }{1-b_2 a_2} \right),
\end{equation}
whenever they are positive, then the system has an equilibrium at $(x_2,y_2)$ with a double zero eigenvalue, with $x_2>0$ and $y_2>0$. \label{BTteo2}
\end{theorem}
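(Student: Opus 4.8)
The plan is to mirror the proof of Theorem~\ref{BTteo1}, replacing the linear root $x_1=-1/b$ of the degenerate case $a\eta=1$ by the positive quadratic root $x_2$ of \eqref{BT10}. The structural input is Lemma~\ref{BTlem1}: a point carries a double zero eigenvalue exactly when its coordinates solve \eqref{BT2} together with \eqref{BT3}. Consequently the pair $(x_2,y_2)$ is built to satisfy the Bogdanov--Takens condition $\mathrm{trace}(J)=0=\det(J)$ for \emph{every} choice of $h,\delta$, because $x_2$ is a root of \eqref{BT2} and $y_2$ is the substitution of $x_2$ into \eqref{BT3}. What remains is to force this point to be an interior equilibrium of \eqref{ec2} and to record its positivity; these two demands are precisely what determine $h$ and $\delta$.

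First I would pin down the sign of $x_2$. Under $0<a<1/\eta$ we have $a\eta-1<0$, so the discriminant of \eqref{BT2}, namely $b^2\eta^2-4(a\eta-1)\eta=b^2\eta^2+4\eta(1-a\eta)$, is strictly positive, while the product of the two roots equals $\eta/(a\eta-1)<0$. Hence \eqref{BT2} has two real roots of opposite sign, and exactly one is positive. A short check identifies it as the branch \eqref{BT10}: since $2(a\eta-1)<0$, that branch is positive iff $-b\eta-\sqrt{b^2\eta^2-4(a\eta-1)\eta}<0$, an inequality that holds for $b\ge 0$ trivially and for $b<0$ because squaring reduces it to $0<-4(a\eta-1)\eta$. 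Thus $x_2>0$, and it depends only on $a,b,\eta$.

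Next I would require $(x_2,y_2)$, with $y_2$ taken from \eqref{BT3}, to solve both interior equilibrium equations. The second equilibrium equation is \eqref{ec6}, $y_2=\delta(m+x_2)/\eta$; equating it with the trace-derived expression \eqref{BT3} is \emph{linear} in $h$ and $\delta$ and solves for $\delta$ as $\delta=a_1(x_2)+a_2(x_2)h$, with $a_1,a_2$ as in \eqref{BT11}. Substituting \eqref{BT3} into the first equilibrium equation $1-x_2-x_2y_2/(ax_2^2+bx_2+1)-h/(c+x_2)=0$ is likewise linear and solves for $h$ as $h=b_1(x_2)+b_2(x_2)\delta$. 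Viewing these as a $2\times 2$ linear system in $(h,\delta)$ and eliminating $\delta$ yields $h(1-b_2a_2)=b_1+b_2a_1$, hence the stated closed forms $h=(b_1+b_2a_1)/(1-b_2a_2)$ and $\delta=a_1+a_2h$. For these values $(x_2,y_2)$ is at once an equilibrium and a point with double zero eigenvalue, and positivity of $y_2$ is immediate from \eqref{ec6}, since $\delta>0$, $m+x_2>0$ and $\eta>0$ give $y_2=\delta(m+x_2)/\eta>0$.

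The main obstacle is not conceptual but one of nondegeneracy and well-definedness. I must check that \eqref{BT3} makes sense, i.e. $bx_2+2\neq 0$, and that the common denominator $(bx_2+2)(c+x_2)-c(ax_2^2+bx_2+1)$ of $b_1,b_2$ in \eqref{BT11} does not vanish, so that the two equilibrium relations are genuine linear equations with finite coefficients; here the hypothesis $b>-2\sqrt{a}$ is useful since it keeps $ax_2^2+bx_2+1>0$. The decisive nonvanishing is $1-b_2(x_2)a_2(x_2)\neq 0$, which makes the linear system uniquely solvable and plays the role of the condition $Q_2\neq 0$ used in the quartic cases; I would either verify it from the explicit coefficients or add it as a standing assumption. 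Granting these and the explicit ``whenever they are positive'' clause on $h$ and $\delta$, the conclusion follows from the same elimination that produced Theorem~\ref{BTteo1}.
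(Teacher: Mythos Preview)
Your proposal is correct and follows essentially the same route as the paper: identify the positive root $x_2$ of \eqref{BT2} via the sign of the product of roots, define $y_2$ through \eqref{BT3}, impose the two equilibrium equations to obtain the pair of linear relations $\delta=a_1+a_2h$ and $h=b_1+b_2\delta$, solve for $(h,\delta)$, and read off $y_2>0$ from \eqref{ec6}. If anything, your treatment is slightly more careful than the paper's, which does not explicitly discuss the nondegeneracy conditions $bx_2+2\neq 0$ and $1-b_2a_2\neq 0$ that you flag.
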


\subsubsection{Case $a \eta >1$} 
This case is similar to $a \eta<1$. Equation \eqref{BT2} can be rewritten as \eqref{BT7}, which has two roots: real with same sign or complex conjugate (depending on the discriminant), given by

\begin{equation}
x_{3,4}= \frac{1}{2(a \eta-1)} (-b \eta \pm \sqrt{b^{2} \eta^{2}-4(a \eta-1) \eta} ). \label{BT12}
\end{equation}

To avoid complex values for $x$, we ask $ b^{2} \eta^{2}-4(a \eta-1) \eta \geq 0 $ or equivalently $b^{2} \eta-4 (a \eta-1)\geq 0$, under this assumption, $x_3$ and $x_4$ are both real with same sign. Moreover, using expression \eqref{BT7} both are positive iff $b<0$ and $x_1=x_2= \frac{-b \eta}{2(a \eta-1)}$ when $b^{2} \eta-4 (a \eta-1)= 0$.

Again, from \eqref{BT3} the value of $y$ for each $x_i$ is:
\begin{equation}
y_i= \dfrac{(ax_i^{2} + bx_i+1)^{2}}{x_i(bx_i+2)} \left(1-2x_i - \dfrac{hc}{(c+x_i)^{2}}- \delta  \right).
\end{equation}
And substituting $x_i$ and $y_i$ ($i=3,4$) in equilibria equations
\begin{align}
h &= \frac{b_1(x_i)+b_2(x_i) a_1(x_i) }{1-b_2(x_i) a_2(x_i)}=:h_i\\
 \delta &=a_1(x_i)+a_2(x_i) \left( \frac{b_1(x_i)+b_2(x_i) a_1(x_i) }{1-b_2(x_i) a_2(x_i)} \right)=: \delta_i, i=3,4.
\end{align}
 As in previous case, whenever $(x_i,y_i)$ satisfy the equilibria equations, then $y_i>0$. 
\begin{theorem}
Let $c, \eta, m>0$, $a> \frac{1}{\eta}$ and $0>b>-2 \sqrt{a}$.
\begin{enumerate}
 \item If $ b^{2} \eta-4 (a \eta-1)<0 $, there is no  equilibrium points with double zero eigenvalues.
 \item If $b^{2} \eta-4 (a \eta-1)\geq 0$, let $x_3$ ($x_4$) as \eqref{BT12}. If $h=h_3$ ($h_4$) and $\delta= \delta_3$ ($\delta_4$) then the system has an equilibrium: $(x_3,y_3)$ (or ($x_4, y_4$) ),  with a double zero eigenvalue, with $x_i>0$ and $y_i>0$. Moreover, $(x_3,y_3)=(x_4,y_4)$ when $b^{2} \eta-4 (a \eta-1)=0$.
\end{enumerate} \label{BTteo3}
\end{theorem}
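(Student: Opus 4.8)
The plan is to follow exactly the template established for the case $a\eta<1$ preceding Theorem \ref{BTteo2}, exploiting Lemma \ref{BTlem1} to replace the Bogdanov-Takens condition $\mathrm{trace}=\det=0$ by the equivalent pair \eqref{BT2}--\eqref{BT3}. Since we are assuming $a\eta>1$, equation \eqref{BT2} is a genuine quadratic with positive leading coefficient $a\eta-1>0$, and its discriminant is $\eta\bigl(b^2\eta-4(a\eta-1)\bigr)$. Because $\eta>0$, the sign of the discriminant is governed solely by $b^2\eta-4(a\eta-1)$, which immediately gives part 1: when this quantity is negative the two roots $x_{3,4}$ of \eqref{BT12} form a complex-conjugate pair, so no real $x$ can satisfy the Bogdanov-Takens condition, and hence no equilibrium with a double zero eigenvalue exists.

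For part 2, assuming $b^2\eta-4(a\eta-1)\ge 0$, I would first pin down the signs of the two real roots by Vieta's formulas applied to \eqref{BT7}: the product of the roots equals $\eta/(a\eta-1)>0$, so the roots share a common sign, while the sum equals $-b\eta/(a\eta-1)$, which is strictly positive precisely because $b<0$ and $a\eta-1>0$. Hence both $x_3$ and $x_4$ are positive. Next, for a fixed admissible root $x_i$ the value $y_i$ from \eqref{BT3} only guarantees $\mathrm{trace}=\det=0$, not that $(x_i,y_i)$ is an equilibrium; so I would impose the steady-state equations and, exactly as in the derivation leading to \eqref{BT11}, obtain the linear system $\delta=a_1(x_i)+a_2(x_i)h$, $h=b_1(x_i)+b_2(x_i)\delta$. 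Solving it yields the stated $h_i,\delta_i$, and with these parameter values $(x_i,y_i)$ is a genuine interior equilibrium satisfying the Bogdanov-Takens condition. Positivity of $y_i$ then follows from \eqref{ec6}, since $y_i=\delta_i(m+x_i)/\eta$ with $m,\eta>0$, $x_i>0$ and $\delta_i>0$.

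The \emph{moreover} clause is the degenerate sub-case: when $b^2\eta-4(a\eta-1)=0$ the discriminant vanishes, so \eqref{BT12} collapses to the single repeated root $x_3=x_4=-b\eta/\bigl(2(a\eta-1)\bigr)$; feeding this common value through \eqref{BT3} (equivalently \eqref{ec6}) forces $y_3=y_4$, whence $(x_3,y_3)=(x_4,y_4)$.

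The routine algebra — expanding the equilibrium equations, identifying the coefficients $a_1,a_2,b_1,b_2$, and inverting the $2\times 2$ linear system — is identical to the $a\eta<1$ case and carries over verbatim. The only genuine point requiring care is the solvability and consistency of that linear system: one must check that the denominator $1-b_2(x_i)a_2(x_i)$ appearing in $h_i,\delta_i$ does not vanish at the relevant root, and that the $y$-value produced by the trace equation \eqref{BT3} coincides with the one produced by the second equilibrium equation \eqref{ec6} once $h=h_i,\delta=\delta_i$ are substituted. This compatibility is exactly what makes $(x_i,y_i)$ simultaneously an equilibrium and a Bogdanov-Takens point, and it is the step I would verify most carefully before asserting positivity of $y_i$.
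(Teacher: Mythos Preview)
Your proposal is correct and follows essentially the same route as the paper: reduce via Lemma~\ref{BTlem1} to the quadratic \eqref{BT2}, use the discriminant to settle part~1, use Vieta on \eqref{BT7} together with $b<0$ to force both roots positive, then impose the equilibrium equations to extract $h_i,\delta_i$ and read off $y_i>0$ from \eqref{ec6}. If anything, your write-up is more careful than the paper's, which does not explicitly flag the nonvanishing of $1-b_2(x_i)a_2(x_i)$ or the consistency check between \eqref{BT3} and \eqref{ec6}; these are tacitly subsumed under the hypothesis ``whenever they are positive'' in the theorem statement.
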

			
		\subsubsection{Normal form}

Take $h, \delta$ as bifurcation parameters and let $E=(x_1,y_1)$ a positive equilibrium of system \eqref{ec2}, that presents a double zero value at the Bogdanov-Takens point $h=h_{BT}, \delta= \delta_{BT}$. The Jacobian matrix of system at $(x_1,y_1)$ for an arbitrary value of $h$ and $\delta$ is given by \eqref{ec10}:
\begin{equation*}
J(E_1) = \left( 
\begin{matrix}
1-2x_1- \dfrac{x_1y_1(bx_1+2)}{(a(x_1)^{2} + bx_1+1)^{2}} - \dfrac{hc}{(c+x_1)^{2}} & -\dfrac{(x_1)^{2}}{a(x_1)^{2}+bx_1+1} \\
\dfrac{\delta^{2}}{\eta} & - \delta 
\end{matrix}
\right). 
\end{equation*}
 We transform system with the change $u=x-x_1, v=y-y_1$, obtaining:
\begin{align*}
u'&= \left( u+{ x_1} \right)  \left( 1-u-{ x_1} \right) -{\frac {
 \left( u+{ x_1} \right) ^{2} \left( v+{ y_1} \right) }{a \left( u
+{x_1} \right) ^{2}+b \left( u+{ x_1} \right) +1}}-{\frac {h
 \left( u+{ x_1} \right) }{c+u+{ x_1}}}, \\
v' &= \left( v+{ y_1} \right)  \left( \delta-{\frac {\eta\, \left( v+{
y_1} \right) }{m+u+{ x_1}}} \right) .
\end{align*}
In order to move the bifurcation parameters at $(0,0)$ (similar to equilibrium), let $\lambda=(\lambda_1, \lambda_2)$ and consider a perturbation of system in form $h=h_{BT}+ \lambda_1, \delta=\delta_{BT}+ \lambda_2$. Then previous system is rewritten as:
\begin{align}
u'&=\left( u+{ x_1} \right)  \left( 1-u-{ x_1} \right) -{\frac {
 \left( u+{x_1} \right) ^{2} \left( v+{y_1} \right) }{a \left( u
+{x_1} \right) ^{2}+b \left( u+{x_1} \right) +1}}-{\frac {
 \left( h_{BT}+{\lambda_1} \right)  \left( u+{x_1} \right) }{c
+u+{x_1}}}\\
& =: g_1((u,v), \lambda), \nonumber \\
v'&= \left( v+{ y_1} \right)  \left(  \delta_{BT}+\lambda_2-{\frac 
{\eta\, \left( v+{ y_1} \right) }{m+u+{x_1}}} \right)=: g_2((u,v), \lambda) . \label{ec11}
\end{align}

Or in short form $(u',v')^T= g((u,v), \lambda)$ with $g=(g_1,g_2)$. Note that the Jacobian matrix of system \eqref{ec11} at $(u,v)=(0,0)$ and $\lambda=0$ is equivalent to \eqref{ec10} evaluated at $(x_1,y_1)$ and $(h_{BT}, \delta_{BT})$. Therefore, if we denote the Jacobian matrix of \eqref{ec11} as $J((u,v), \lambda)$ then at $\lambda=0$, $(u,v)=0$, $J((0,0),0)$ has a double zero eigenvalue and it is equivalent to:

\begin{align}
1-2x_1- \dfrac{x_1y_1(bx_1+2)}{(a(x_1)^{2} + bx_1+1)^{2}} - \dfrac{h^{BT}c}{(c+x_1)^{2}} &= \delta^{BT}, \nonumber \\
\dfrac{(x_1)^{2}}{a(x_1)^{2}+bx_1+1}= \eta. \label{ec13}
\end{align}
So,
\begin{equation}
J((0,0),0)=\left( \begin{matrix}
\delta_{BT} & - \eta \\ \frac{{\delta_{BT}}^2}{\eta} & - \delta_{BT} 
\end{matrix} \right) =: J_0
\end{equation}

Let $v_0,v_1$ the generalized eigenvectors of $J_0$ and $w_0,w_1$ the generalized eigenvectors of $J_0^{T}$, given by:
\begin{equation*}
v_0 = \left( \begin {array}{c} \eta\\ \noalign{\medskip}\delta_{BT}\end {array}
 \right), \quad 
v_1 = \left( \begin {array}{c} \eta\\ \noalign{\medskip}\delta_{BT}-1\end {array}
 \right), \quad 
w_0 = \left( \begin {array}{c} - \frac{\delta_{BT}-1}{\eta} \\ \noalign{\medskip} 1 \end {array} \right), \quad
w1= \left( \begin {array}{c} \frac{\delta_{BT}}{\eta} \\ \noalign{\medskip}-1\end {array}
\right),
\end{equation*}

which satisfies $J_0 v_0=0$, $J_0 v_1=v_0$, $J_0^{T}w_1=0$, $J_0^{T}w_0=w_1$ and $ \langle v_1,w_1 \rangle = \langle v_0,w_0 \rangle =1  $, $\langle v_0,w_1 \rangle = \langle v_1,w_0 \rangle = 0 $. For matrix $P=[v_0 | v_1]$, define the change of variable $$(Y_1,Y_2)^T= P^{-1} (u,v)^T,$$

where $P$ has the property 
$$ P^{-1} J_0 P = \left( \begin{matrix}
0 &1 \\0 & 0
\end{matrix}
\right). $$

Then system \eqref{ec11} can be rewritten as 
\begin{align*}
Y_1 ' &= \langle g( Y_1 v_0+Y_2v_1, \lambda ),w_0 \rangle,  \\
Y_2' &= \langle g( Y_1 v_0+Y_2v_1, \lambda ),w_1 \rangle.
\end{align*}

Expanding the products above with Taylor expansion, we obtain the system 
\begin{align}
Y_1' &= Y_2+a_{00}(\lambda)  + a_{10}(\lambda) Y_1 +a_{01}(\lambda) Y_2+ \frac{1}{2} a_{20}(\lambda) Y_1^2+ a_{11}(\lambda)Y_1 Y_2 \nonumber \\
& + \frac{1}{2} a_{02}(\lambda)Y_2^2+..., \label{ec14} \\
 Y_2' &= b_{00}(\lambda)  + b_{10}(\lambda) Y_1 +b_{01}(\lambda) Y_2+ \frac{1}{2} b_{20}(\lambda) Y_1^2+ b_{11}(\lambda)Y_1 Y_2\nonumber\\
 & + \frac{1}{2} b_{02}(\lambda)Y_2^2+..., \nonumber
\end{align}

With help of Maple, we compute each coefficient $a_{ij}, b_{ij}$, and then we use the equilibria equations \eqref{ec12} and the trace and determinant equations \eqref{ec13} to simplify them. We obtain:

\begin{align*}
a_{00}(\lambda)&= {\frac {{ \lambda_1}\,{x_1}\, \left( { \delta^{BT}}-1 \right) }{
 \left( c+{ x_1} \right) \eta}}+{y_1}\,{ \lambda_2}, \\
 a_{10}(\lambda) &= \frac{(\delta^{BT}-1) c \lambda_1}{(c+x_1)^2 } + \delta^{BT} \lambda_2, \\
 a_{01}(\lambda) &= \frac{(\delta^{BT}-1) c \lambda_1}{(c+x_1)^2 } + (\delta^{BT}-1) \lambda_2,
\end{align*}
\begin{align*} 
 a_{20}(\lambda) &=  - \frac{2(\delta^{BT}-1)}{\eta} \left[  - \frac{1}{ax_1^2+bx_1+1} \right[ -\frac{x_1^2y_1a \eta^2}{ax_1^2+bx_1+1}+\\
 &+ \eta^2y_1+2x_1 \eta \delta^{BT}- \frac{x_1(2a \eta x_1+b \eta)}{(ax_1^2+b x_1+1)^2} \left( x_1 \delta^{BT}(ax_1^2+bx_1+1)+b \eta x_1y_1+2 \eta y_1 \right)   \left] \right] + \\
& - 2(\delta^{BT}-1)\left( - \eta + \frac{(h^{BT}+ \lambda_1)\eta c}{(c+x_1)^3} \right), \\
a_{11}(\lambda) &= - \frac{2}{\eta} (\delta^{BT}-1) \left( - \eta^{2}- \frac{\eta(\eta y_1 + 2x_1 \delta^{BT}-x_1)}{(ax_1^2+bx_1+1)}+ \frac{x_1^{2}(2 a \eta x_1+b \eta )(2 \delta^{BT}-1)}{2(ax_1^2+bx_1+1)^2} \right)+ \\
&- \frac{2}{\eta} (\delta^{BT}-1) \left( \frac{x_1(b \eta x_1y_1 + 2 \eta y_1)(2a \eta x_1+b \eta)}{(ax_1^2+bx_1+1)^3} + \frac{x_1^{2}y_1 a \eta^{2}}{ax_1^{2}+bx_1+1} + \frac{(h^{BT}+ \lambda_1) \eta^{2}c}{(c+x_1)^3} \right), \\
a_{02}(\lambda) &= - \frac{2(\delta^{BT}-1)}{\eta} \left[ - \eta^{2} + \frac{(h^{BT}+ \lambda_1)\eta^{2}c}{(c+x_1)^3} - \frac{1}{ax_1^{2}+bx_1+1} \right[2x_1 \eta (\delta^{BT}-1)+ \\
&- x_1 \left( \frac{\delta^{BT} x_1}{ax_1^{2}+bx_1+1} + \frac{b \eta x_1 y_1+2 \eta y_1}{(ax_1^{2}+bx_1+1)^2} - \frac{x_1}{ax_1^2+bx_1+1} \right) (2 a \eta x_1+b \eta) + \\
& - \frac{x_1^2y_1 a \eta^{2}}{a x_1^2+bx_1+1}+ \eta^{2}y_1 \left] \right]+ \frac{2\eta(\delta^{BT}-1)}{m+x1}- \frac{2\eta^2y_1}{(m+x_1^2)},
\end{align*}

\begin{align*}
b_{00} (\lambda)&= \frac{- \delta^{BT} x_1 \lambda_1 }{\eta( c+x_1 )}- \lambda_2 y_1, \\
b_{10} (\lambda)&=- \frac{c \delta^{BT}  \lambda_1 }{(c+x_1)^2}- \delta^{BT} \lambda_2, \\
b_{01} (\lambda)&= \frac{- c \delta^{BT} \lambda_1}{(c+x_1)^2}- ( \delta^{BT}-1 ) \lambda_2, \\
b_{20} (\lambda)&= \frac{2 \delta^{BT}}{\eta} \left[ - \frac{1}{ax_1^2+bx_1+1} \right( - \frac{x_1^2y_1a \eta^{2} }{ax_1^2+bx_1+1} + \eta^{2}y_1 + 2 x_1 \eta \delta^{BT}+ \\
&- \frac{x_1^{2} \delta^{BT}(2 a \eta x_1+b \eta)}{(ax_1^{2}+bx_1+1)}- \frac{(2 a \eta x_1 + b \eta)x_1 \eta y_1 (bx_1+2)}{(ax_1^2+bx_1+1)^{2}} \left) - \eta^{2} + \frac{(h^{BT}+ \lambda_1)\eta^{2}c}{(c+x_1)^3} \right],\\
\end{align*}
\begin{align*}
b_{11} (\lambda)&= \frac{\delta^{BT}}{\eta} \left[-2 \eta^{2} - \frac{1}{ax_1^{2}+bx_1+1} ( 2 \eta^{2} y_1 + 4x_1 \eta \delta^{BT} - 2x_1 \eta) \right] + \\
& \frac{ \delta^{BT} x_1(2 a \eta x_1+ b \eta)}{ \eta(ax_1^2+bx_1+1)^{3}} [ x_1 (ax_1^{2}+bx_1+1)(2 \delta^{BT}-1 )+2b \eta x_1 y_1+4 \eta y_1 ]+ \\
& + \frac{2a \eta \delta^{BT} x_1^{2}y_1}{ (ax_1^{2}+bx_1+1)^{2}} + \frac{2 \delta^{BT} (h^{BT}+ \lambda_1)\eta c }{(c+x_1)^{3}}, \\
b_{02}(\lambda) &= -2 \delta^{BT} \left( - \eta + \frac{(h^{BT}+ \lambda_1)\eta c}{(c+x_1)^{3}}\right) - \frac{ 2\delta^{BT}}{\eta ( ax_1^{2}+bx_1+1 )} \left[ - \frac{x_1(2a \eta x_1+b \eta)}{(ax_1^2+bx_1+1)^2} \right( \\
&x_1(ax_1^2+bx_1+1)(\delta^{BT}-1)+ b \eta x_1y_1+ 2 \eta y_1 \left) - \frac{x_1^{2}y_1 a \eta^{2}}{ax_1^{2}+bx_1+1}+ 2x_1 \eta (\delta^{BT}-1)+ \eta^{2}y_1 \right]\\
&+ \frac{2 \eta}{m+x_1} .
\end{align*}
Set $u_1=Y_1$ and $u_2 $ the right hand of first the first equation in \eqref{ec14}, then system \eqref{ec14} is transformed into
\begin{align*}
u_1'&=u_2, \\
u_2'&= g_{00}(\lambda)+ g_{10}(\lambda)u_1+ g_{01}(\lambda)u_2+ \frac{1}{2} g_{20}(\lambda)u_1^{2} + g_{11}(\lambda)u_1 u_2 + \frac{1}{2}g_{02}(\lambda)u_2^{2}+ Q(u_1,u_2, \lambda),
\end{align*}
where $Q(u_1,u_2, \lambda)=O(\Vert u \Vert^{3})$ and the relevant terms of $g_{ij}$ are given by $g_{00}(0)=g_{10}(0)=g_{01}(0)=0$,
\begin{align*}
g_{20}(0)&=b_{20}(0), \\
g_{11}(0)&=a_{20}(0)+b_{11}(0),\\ 
 g_{02}(0)&=b_{02}(0)+2a_{11}(0), \\
g_{00}(\lambda) &=  b_{00}(\lambda)+..., \\
g_{10}(\lambda) &= b_{10}(\lambda)+a_{11}(\lambda)b_{00}(\lambda)-b_{11}(\lambda)a_{00}(\lambda)+..., \\
g_{01}(\lambda) &= b_{01} (\lambda)+a_{10}(\lambda)+a_{02}(\lambda) b_{00}(\lambda)-(a_{11}(\lambda) + b_{02}(\lambda))a_{00}(\lambda)+... ,
\end{align*}
where the displayed terms are sufficient to compute the first partial derivatives of $g_{00}(\lambda),g_{10}(\lambda),g_{01}(\lambda)$.
Assume that $g_{11}(0)= a_{20}(0)+b_{11}(0) \neq 0$ (BT.1),
then we can make a parameter shift of coordinates in the $u_1$- direction with $u_1=v_1+ \delta(\lambda), u_2=v_2$ and $ \delta(\lambda)\approx - \frac{g_{01}(\lambda)}{g_{11}(0)} $, then 

\begin{align*}
v_1' &= v_2, \\
v_2' &= h_{00}(\lambda )+h_{10}(\lambda )v_1+ \frac{1}{2}h_{20}(\lambda )v_1^{2}+h_{11}(\lambda )v_1 v_2 + \frac{1}{2}h_{02}(\lambda )v_2^{2}+...
\end{align*}
where  $$h_{20}(0)= g_{20}(0), \quad h_{11}(0)=g_{11}(0), \quad h_{02}(0)=g_{02}(0),$$
and the relevant terms of $h_{kl}$ to compute the first partial derivatives are given by 
\begin{align*}
h_{00}(\lambda) &=g_{00}(\lambda)+... \\
h_{10} (\lambda)&= g_{10}(\lambda)- \frac{g_{20}(0)}{g_{11}(0)} g_{01}(\lambda)+...
\end{align*} 
Introducing a new time via the equation $ dt=(1+ \theta v_1)d \tau $ and $\theta (\lambda)=- \frac{h_{02}(\lambda)}{2}$ we have
\begin{align}
\zeta'_1 &= \zeta_2, \nonumber \\
\zeta_2' &= \mu_1(\lambda)+ \mu_2(\lambda) \zeta_1 + A(\lambda) \zeta_1^{2} + B(\lambda) \zeta_1 \zeta_2 +..., \label{ec15}
\end{align}
where 
\begin{equation}
\mu_1(\lambda)=h_{00}(\lambda), \quad \mu_2(\lambda)=h_{10}(\lambda)- \frac{1}{2}h_{00}(\lambda)h_{02}( \lambda ),
\end{equation}
and
\begin{equation}
A(\lambda)= \frac{1}{2}(h_{20}(\lambda)-h_{10}(\lambda)h_{02}(\lambda)), \quad B(\lambda)=h_{11}(\lambda).
\end{equation}
If we assume $2A(0)= b_{20}(0) \neq 0$ (BT.2),
then we can introduce a new time scaling (denoted by $t$ again) and new variables $\eta_1, \eta_2$, given by
\begin{equation}
t= \left| \frac{B(\lambda)}{A(\lambda)} \right|, \quad \eta_1= \frac{A(\lambda)}{B^{2}(\lambda)} \zeta_1, \quad \eta_2 = sign \left( \frac{B(\lambda)}{A(\lambda)} \right)\frac{A^{2}(\lambda)}{B^{3}(\lambda)} \zeta_2,
\end{equation}
in the coordinates $(\eta_1, \eta_2)$, the system \eqref{ec15} takes the form
\begin{align}
\eta_1'&= \eta_2, \nonumber \\
\eta_2'&= \beta_1+ \beta_2 \eta_1 + \eta_1^{2}+ s \eta_1 \eta_2 + O(\Vert \eta \Vert^{3}) \label{ec16} ,
\end{align}
with $s= sgn( b_{20}(a_{20}(0)+b_{11}(0)) )$
\begin{align*}
\beta_1(\lambda) &= \frac{B^{4}(\lambda)}{A^{3}(\lambda)} \mu_1(\lambda), \\
\beta_2(\lambda) &= \frac{B^{2}(\lambda)}{A^{2}(\lambda)} \mu_2(\lambda).
\end{align*}
In order to define an invertible smooth change of parameters near $\lambda=0$, we also assume $$ \det (\frac{\partial \beta}{\partial \lambda }(\lambda=0)) \neq 0 \quad \text{(BT.3)} .$$ Using 8.4 from \cite{kuznetsov2013elements}, we summarize the previous analysis in the following theorem

\begin{theorem}
Let $E^{*}=(x_1,y_1)$ an equilibrium point with a double zero eigenvalue . If $\delta, h$ are chosen as bifurcation parameters,  $ a_{20}(0)+b_{11}(0) \neq 0 $, $b_{20}(0) \neq 0$,  are satisfied and the  matrix $\left( \frac{\partial (\beta_1, \beta_2)}{\partial (\lambda_1, \lambda_2)}  \right)\mid_{\lambda=0} $ is non-singular,
then there exists smooth invertible variable transformations smoothly depending on parameters, a direction preserving time reparametrization and smooth invertible parameter changes, which reduces the  system \eqref{ec2} to \eqref{ec16}, so, system \eqref{ec2} undergoes a Bogdanov-Takens bifurcation in a small neighbourhood of $E^{*}$ as $h, \delta$ vary near $h^{BT}, \delta^{BT}$.
\end{theorem}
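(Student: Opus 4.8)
The plan is to assemble the chain of coordinate and parameter changes already displayed in this subsection into a single smooth, invertible reduction of \eqref{ec2} to the planar Bogdanov--Takens normal form \eqref{ec16}, and then to invoke Theorem 8.4 of \cite{kuznetsov2013elements}. The starting point is system \eqref{ec11}, which is \eqref{ec2} written in local coordinates $u=x-x_1$, $v=y-y_1$ with the parameters unfolded as $h=h_{BT}+\lambda_1$, $\delta=\delta_{BT}+\lambda_2$; at $\lambda=0$ its linear part is $J_0$, which has a double zero eigenvalue and a single Jordan block. First I would pass to the generalized eigenbasis $\{v_0,v_1\}$ of $J_0$ through $(Y_1,Y_2)^T=P^{-1}(u,v)^T$, reading off the components with the adjoint vectors $w_0,w_1$; this yields system \eqref{ec14} with the coefficients $a_{ij}(\lambda),b_{ij}(\lambda)$ already computed. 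Setting $u_1=Y_1$, $u_2=Y_1'$ then reduces \eqref{ec14} to a second-order scalar form with coefficients $g_{ij}(\lambda)$, where $g_{00}(0)=g_{10}(0)=g_{01}(0)=0$.

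The second stage is the successive normalization of the quadratic and parameter-dependent terms. I would apply the $\lambda$-dependent shift $u_1=v_1+\delta(\lambda)$, $u_2=v_2$ with $\delta(\lambda)\approx -g_{01}(\lambda)/g_{11}(0)$ to kill the surviving linear term, producing coefficients $h_{ij}(\lambda)$; then introduce the nonlinear time change $dt=(1+\theta v_1)\,d\tau$ with $\theta(\lambda)=-h_{02}(\lambda)/2$ to normalize the $v_2^2$ term, arriving at \eqref{ec15} with quadratic coefficients $A(\lambda),B(\lambda)$ and unfolding terms $\mu_1(\lambda),\mu_2(\lambda)$. Each of these maps is smooth, invertible for $\lambda$ near $0$, and smoothly $\lambda$-dependent, so their composition is an admissible transformation. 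At this point the first two non-degeneracy hypotheses enter: since $B(0)=h_{11}(0)=g_{11}(0)=a_{20}(0)+b_{11}(0)$, condition (BT.1) is exactly $B(0)\neq0$; and since $2A(0)=h_{20}(0)=g_{20}(0)=b_{20}(0)$, condition (BT.2) is exactly $A(0)\neq0$. These are precisely what makes the final rescaling $\eta_1=(A/B^2)\zeta_1$, $\eta_2=\operatorname{sign}(B/A)(A^2/B^3)\zeta_2$, together with the time rescaling, well defined and carries \eqref{ec15} to \eqref{ec16} with normalized coefficients $1$ and $s=\operatorname{sgn}(b_{20}(a_{20}(0)+b_{11}(0)))$.

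The decisive stage is the parameter reduction $\lambda\mapsto\beta(\lambda)=(\beta_1(\lambda),\beta_2(\lambda))$ with $\beta_1=(B^4/A^3)\mu_1$, $\beta_2=(B^2/A^2)\mu_2$, which must be a local diffeomorphism at $\lambda=0$. Since $\mu_1(0)=h_{00}(0)=0$ and $\mu_2(0)=0$, and since the prefactors $B^4/A^3$, $B^2/A^2$ are nonzero at $\lambda=0$ by (BT.1)--(BT.2), the Jacobian of $\beta$ at the origin factors as a nonzero scalar times the Jacobian of $(\mu_1,\mu_2)$; hence (BT.3) is equivalent to $\det\,\partial(\mu_1,\mu_2)/\partial(\lambda_1,\lambda_2)\mid_{\lambda=0}\neq0$, and this is exactly the nonsingularity assumption in the statement. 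With the three conditions in hand, \eqref{ec2} near $E^{*}$ is smoothly equivalent, up to the direction-preserving time change, to the versal family \eqref{ec16}, and Theorem 8.4 of \cite{kuznetsov2013elements} then yields a codimension-two Bogdanov--Takens bifurcation as $(h,\delta)$ vary near $(h^{BT},\delta^{BT})$.

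I expect the genuine obstacle to be this last stage in practice, namely producing the first partial derivatives of $\beta$ (equivalently of $\mu_1,\mu_2$) in closed form. Because $\mu_1=h_{00}$ and $\mu_2=h_{10}-\tfrac12 h_{00}h_{02}$ sit at the end of the long chain $a_{ij},b_{ij}\to g_{ij}\to h_{ij}$, differentiating in $\lambda_1,\lambda_2$ at $\lambda=0$ requires only the linear-in-$\lambda$ parts of $a_{00},a_{10},a_{01},b_{00},b_{10},b_{01}$ (which is why just those parts were retained above), but the bookkeeping that produces the $2\times2$ determinant is heavy and is best carried out symbolically with Maple and then checked numerically for the concrete examples. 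By contrast (BT.1) and (BT.2) reduce to the already-displayed $b_{20}(0)$ and $a_{20}(0)+b_{11}(0)$ and are routine to verify.
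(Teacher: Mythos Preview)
Your proposal is correct and follows essentially the same route as the paper: the theorem is stated there as a summary of the preceding normal-form computation (the chain $(u,v)\to(Y_1,Y_2)\to(u_1,u_2)\to(v_1,v_2)\to(\zeta_1,\zeta_2)\to(\eta_1,\eta_2)$ together with the parameter map $\lambda\mapsto\beta$), after which Theorem~8.4 of \cite{kuznetsov2013elements} is invoked. One small imprecision: the Jacobian of $\beta$ at $\lambda=0$ is not a nonzero \emph{scalar} times that of $(\mu_1,\mu_2)$ but rather a nonsingular diagonal matrix (with entries $B(0)^4/A(0)^3$ and $B(0)^2/A(0)^2$) times it; the conclusion on determinants is unaffected.
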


From \cite{kuznetsov2013elements} (chapter 8 section 4) we know that the bifurcation curves can be approximated for small values of $(\lambda_1, \lambda_2)$ by:
\begin{align}
T&= \{ (\lambda_1, \lambda_2), 4 \beta_1(\lambda_1, \lambda_2)- (\beta_2(\lambda_1, \lambda_2))^2=0 \}, \nonumber \\
H&= \{ (\lambda_1, \lambda_2),  \beta_1(\lambda_1, \lambda_2) =0, \beta_2<0 \}, \label{ec21} \\
P &= \{ (\lambda_1, \lambda_2), \beta_1(\lambda_1, \lambda_2)=- \frac{6}{25}  (\beta_2(\lambda_1, \lambda_2))^2=0, \beta_2<0 \}. \nonumber
\end{align}
The curve $T$ divides the plane $\lambda_1- \lambda_2$ in two zones, one of them with two equilibria points and the other with no equilibria. On this curve there exists only one equilibrium. The curve $H$ corresponds to the existence of Hopf bifurcation and the existence of a limit cycle (stable if $s<0$ and unstable if $s>0$) curve $P$ for the existence of a homoclinic loop. 
			
		\subsubsection{Numerical simulations}

\begin{example}
Let the parameters be defined as follows: $\eta=0.1, a=2,b=-2.82,c=0.05,m=0.8$, then we have  $b+ 2 \sqrt{a}=0.008427124>0$ which implies $p(x)>0$ as required in introduction. Computing, $a \eta=0.2<1 $, so using   theorem \eqref{BTteo2} we have an equilibrium with double zero eigenvalue at $h^{BT}=0.1715598183, \delta^{BT} =0.03070149222 $. \par 

For these values, system \eqref{ec3} is rewritten as 
\begin{align}
\frac{dx}{dt} &= x \left( 1-x \right) -{\frac {{x}^{2}y}{2\,{x}^{2}- 2.82\,x+1}}-
 0.1715598183\,{\frac {x}{ 0.05+x}}, \label{ec22} \\
\frac{dy}{dt} &= y \left(  0.03070149222- 0.1\,{\frac {y}{ 0.8+x}} \right) . \nonumber
\end{align}

The trivial equilibria points of system are: 
\begin{align*}
E&=(0,0), E_y =( 0, 0.2456119378), \\
E^{+}&=(0.7975913540, 0), E^{-}=(0.1524086460,0 ), 
\end{align*} 
 
 and there is a single interior equilibrium  with zero double eigenvalue  given by $$ E_1=(0.2187994431,0.3127866314).$$
With help of Maple software, we compute the change of variable derived in previous sections, in order to compute the Bogdanov Takens conditions BT1, BT2, BT3, obtaining:
 \begin{align*}
 &g_{11}(0)=-0.5922764628 \neq 0, 2A(0,0)= -0.01942828012 \neq 0, \\
 &  \det \left( \frac{\partial (\beta_1, \beta_2)}{\partial (\lambda_1, \lambda_2)}  \right)\mid_{\lambda=0}= 3.559954288^* 10^6 .
 \end{align*}
  Computing the coefficient $s$ of the normal form in \eqref{ec16}, we arrive to $s=0.01150691302>0$, so the limit cycle will be unstable. \par 

\begin{figure}
\subfigure[] {\includegraphics[scale=0.5]{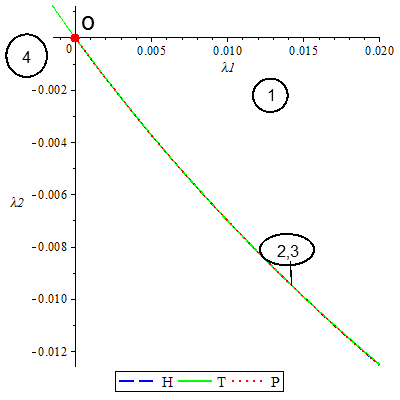}}
\subfigure[] {\includegraphics[scale=0.8]{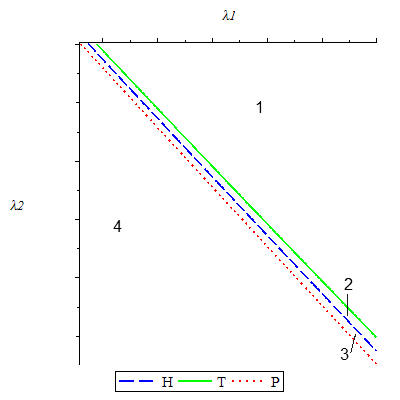}}
\caption{Local approximation of bifurcation curves for values $(\lambda_1, \lambda_2)$ near $(0,0)$ for $\eta=0.1, a=2,b=-2.82,c=0.05,m=0.8$.  For instance, in a) the three curves are very closely so they look like a single curve, but in b) the zones between each one can be observed, with a zoom. } \label{fig6}
\end{figure}

From \eqref{ec21} we have the local representation of bifurcations curves, which are plotted in figure \eqref{fig6} with the sections between them, and define the behaviour of interior equilibria points for values of $(\lambda_1, \lambda_2)$ near $(0,0)$.  \par 
For $h^{BT}=0.1715598183, \delta^{BT} =0.03070149222 $, we take $ \lambda_1=0.02$  and vary $\lambda_2$ at each section of figure \eqref{fig6}. Setting $h= h^{BT}+ \lambda_1, \delta= \delta^{BT}+ \lambda_2$ to plot the phase portrait of system \eqref{ec22} in a neighbourhood of $E_1=(0.2187994431,0.3127866314)$ and using the theoretical results given in \cite{kuznetsov2013elements} about the phase portrait on each zone, we obtain the following:
\begin{enumerate}

\begin{figure}
\subfigure[] {\includegraphics[scale=0.5]{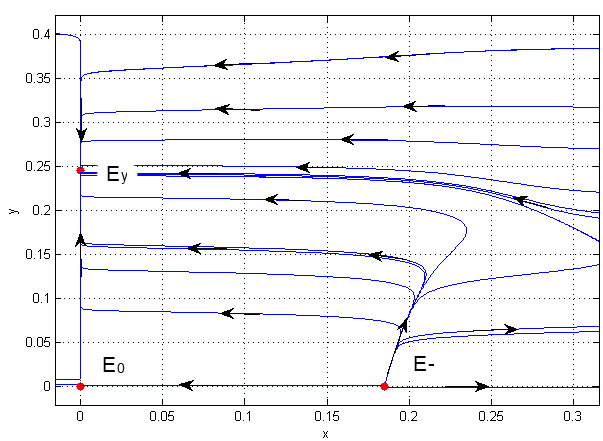}}
\subfigure[] {\includegraphics[scale=0.5]{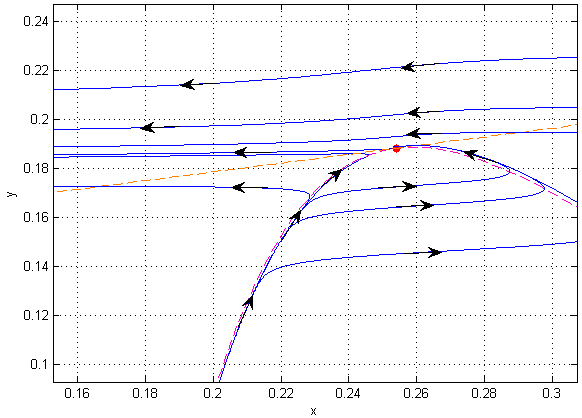}}
\caption{ Phase portrait of system \eqref{ec22} for $\lambda_1=0.02$. a) $\lambda_2=0$, $h=0.1915598183$ and $\delta=0.03070149222$. There is no interior equilibria points. b) $ \lambda_2=-0.01283735222$, $h=0.1915598183$ and $\delta=0.01786414000$. An interior equilibrium appears.} \label{fig7}
\end{figure}

\item In section 1, above the curve T there is no interior equilibrium points and around the curve $T$, a single equilibrium $E^{*}$ with a zero eigenvalue appears. Figure \eqref{fig7}.

\begin{figure}
\begin{center}
\includegraphics[scale=0.6]{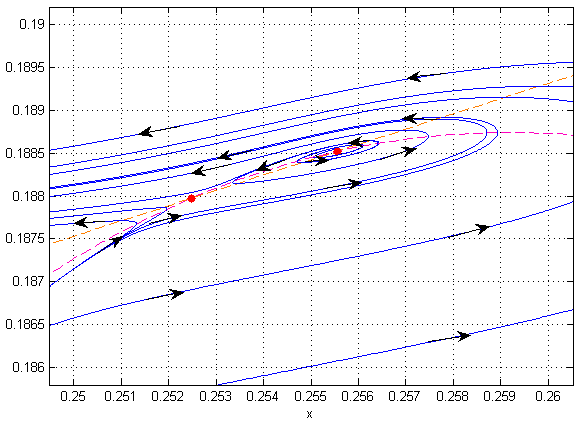} 
\end{center} \caption{Phase portrait of system \eqref{ec22} for $\lambda_1=0.02$ and $\lambda_2=-0.01284149222$. $h=0.1915598183 $ and $\delta=0.01786$, there are two equilibria: a saddle and a spiral source.} \label{fig8}
\end{figure}

\begin{figure}
\subfigure[]{\includegraphics[scale=0.5]{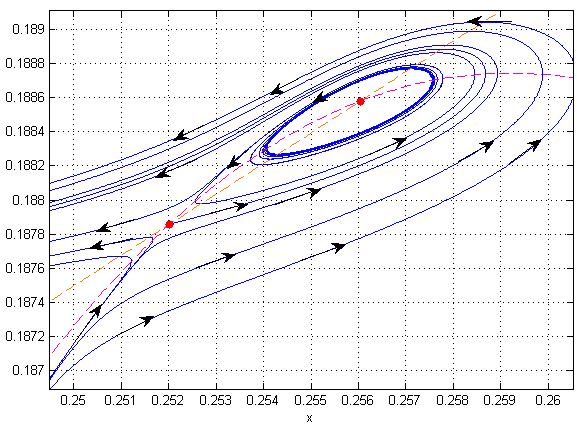}}
\subfigure[]{\includegraphics[scale=0.5]{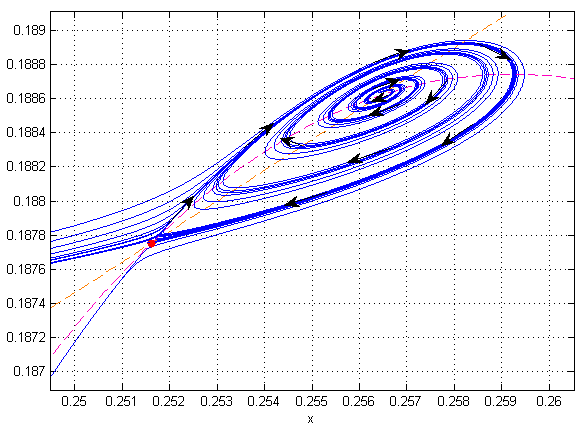}}
\caption{Phase portrait of system \eqref{ec22} for $\lambda_1=0.02$ . a) $\lambda_2=-0.01284449222$, $h=0.1915598183 $ and $ \delta=0.017857 $, there exists an unstable limit cycle. b) Approximation of homoclinic loop with $\lambda_2=-0.01284767222$, $h=0.1915598183, \delta=0.01785382 $.} \label{fig9}
\end{figure}

\item In section 2, between $T$ and $H$ the equilibrium $E^{*}$ is divided in two equilibria : $E_1$ which is a saddle and $E_2$ which is a spiral source (unstable). $E_1$ and $E_2$ are closer as $(\lambda_1, \lambda_2)$ remains close to $T$. Figure \eqref{fig8}.
\item The curve $H$ corresponds to a Hopf Bifurcation  of equilibrium $E_2$, which changes its stability from source (unstable) to a nodal sink (stable). Section 3 presents the existence of an unstable limit cycle around $E_2$. $E_2$ becomes into a nodal sink and $E_1$ remains as a saddle. The orbit of limit cycle becomes closer and closer to $E_1$ as $(\lambda_1, \lambda_2)$ goes to curve $P$. On the curve $P$, the limit cycle around $E_2$ becomes into a homoclinic loop. Figure \eqref{fig9}.
\item In section 4 the homoclinic loop disappears and $E_2$ remains stable while $E_1$ is always a saddle.
\item At point $O$ where all curves intersect, we have the Bogdanov-Takens point, where there exist a single equilibrium $E_1=(0.2187994431,0.3127866314)$ with double zero eigenvalue.  
\end{enumerate}

\end{example}

\section{Conclusions}			

The predator-prey models have been extensively studied by mathematical and biological researchers since its introduction made by Lotka and Volterra. Its importance lies in understanding the dynamics between two species (a predator and a prey) that live together in the same environment, in order to look for suitable conditions that allow the both species survive in equilibria. However, several authors (see for example \cite{gupta2013bifurcation}, \cite{gupta2015dynamical}, \cite{hu2017stability}) have shown that considering a harvesting term in the model can lead to the extinction of any species. \par  
In this paper we describe the dynamics and bifurcations of a predator-prey system with  functional response of Holling type III, that considers a Michaelis-Menten harvesting term in prey population.  The choice of the functional response as Holling type III and the harvesting term, gave rise to a wide variety of scenarios for the existence of positive (and total) equilibria points. The equilibria points obtained were of two kinds: trivial, with a component equal zero, that represents extinction of any population; and interior, where its components are not zero and both species exist. The interior points were located in three zones of existence: $K_1, K_2, K_3$, depending on the sign of $h-c$, this fact suggests that the number of non-trivial equilibria points admitted for the system depends strongly by the harvesting rate (the parameters $h$ and $c$ are the result of a re-scaling in the original harvesting term). All the possible cases  were  mathematically described. \par  
 Four trivial equilibria points were obtained: the extinction point $E=(0,0)$ (where there is no predator neither prey), two predator-free: $E^+, E^-$ (where there is only prey population) and the prey extinction $E_y$. We determined the stability of trivial points via the linearization of the system around each one. The results say that the extinction point $E$ could be a saddle or an unstable node, for $h-c \neq 0$. When $h-c=0$, the equilibrium presents a saddle-node bifurcation, and at $h=c$ the equilibrium is a saddle or a saddle node. In all cases, the extinction equilibrium is unstable, so there is no possibility (under the assumptions of our model) that both species go to extinct at same time. This phenomena appears also for the predator free equilibria, where both are always unstable, indicating that the population will never go to a state with preys and no predators. However, the extinction could be of preys at $E_y$. $E_y$ is locally asymptotically stable when $c<h$ (where both parameters, $h$ and $c$ are directly related to the harvesting rate of preys, and the carrying capacity of environment), and in this case we can have that predators survive by eating its alternative food and the preys go to extinct. This is a scenario that biologist try to avoid. \par
 Due to the variety of cases for the existence of interior equilibria, we do not compute the linearization  of system at all the interior equilibria points, instead of, we provide an extensive bifurcation analysis. When we fix all parameters and vary $\delta$, the system has an equilibrium $E^*$ which presents a Hopf bifurcation at $\delta= \delta^H$, making possible the existence of a limit cycle around $E^*$. The first Lyapunov coefficient was also calculated to determine the stability of the limit cycle. When $\delta$ and $h$ are taken as bifurcation parameters, the system presents an equilibrium with zero double eigenvalue at $\delta=\delta^{BT}, h=h^{BT}$, and therefore, a Bogdanov-Takens bifurcation of codimension two. The dynamics of the system for values of parameters near to the Bogdanov-Takens point are extensively described, obtaining the apparition of limit cycles and homoclinic loops.  The bifurcation parameters were taken as $\delta$ and $h$ following the references, but it will be interesting to make a bifurcation analysis varying only the harvesting parameters $h, c$.\par 
A Maple code was implemented to obtain numerically the approximation of the curves $T, H, P$ (for the existence of equilibria, Hopf bifurcation and homoclinic loop, respectively) that divide the plane of parameters  in the different phase portraits possibles in a Bogdanov-Takens bifurcation. Even when we obtain an approach of the curves that let us to find the limit cycle and the homoclinic loop, the order of approximation depends strongly in the  neighbourhood of $(\delta^{BT}, h^{BT})$ that is taken, so an smaller neighbourhood must give a better approach. 
In the biologically meaning, it is very interesting to try to validate the model that we propose in this article with real values. If this system results a good model for the real values, then we can take the mathematical results obtained in the harvesting parameter $h$, to define harvesting laws and restrictions that avoid the stability of the prey-extinction equilibria and allow the stability of an interior equilibrium, because in this scenario we will gain the coexistence of both species in a long time.

 \appendix
\section{Appendix: Method of Ferrari} \label{apen1}
	
Let the arbitrary equation 
\begin{equation}
P(x)=x^4+Ax^3+Bx^2+Cx+D=0.
\end{equation}
Introducing the change of variable (a Tchirnhausen substitution to eliminate the cubic term) $X=x+ A/4$, then the equation is equivalent to:
$$P(x)=P(X-A/4)=X^4+P_2X^3+Q_2X+r=Q(X),$$
where:
\begin{align*}
P_2 &= - \frac{3}{8}A^2+B,\\
Q_2 &= \frac{1}{8}A^3- \frac{1}{2}AB+C, \\
r&= - \frac{3}{256}A^4+ \frac{1}{16}A^2 B- \frac{1}{4} AC+D.
\end{align*}
Now, for an arbitrary $u$:
$$(X^2+ \frac{P_2}{2}+u)^2=X^4+P_2X^2+2X^2u+ \frac{1}{4} P_2^2 + P_2 u+ u^2,$$
so, we can rewrite $Q(X)$ as
\begin{align*}
Q(X) &= (X^2+ \frac{P_2}{2}+u )^2- \left[ 2X^2u-Q_2 X+ \left( u^2+P_2u+ \frac{1}{4}P_2^2-r \right) \right], \\
&=(X^2+ \frac{P_2}{2}+u )^2 -2u\left[ X^2- \frac{Q_2}{2u} X  + \left( \frac{u}{2}+ \frac{P_2}{2} + \frac{P_2^2}{8u}- \frac{r}{2u} \right) \right],
\end{align*}
whenever $u \neq 0$. To have a quadratic expression in brackets we ask for an $u$ such that
$$  \left( \frac{Q_2}{4u} \right)^2 = \frac{u}{2}+ \frac{P_2}{2} + \frac{P_2^2}{8u}- \frac{r}{2u}, $$
or equivalently
\begin{equation}
8u^3+8P_2u^2+2P_2^2u-8ru- Q_2^2=0. \label{ap1}
\end{equation}
Therefore, when $u$ satisfies equation \eqref{ap1}, $Q(X)$ has the following form:
\begin{align*}
Q(X) &= \left( X^2+ \frac{P_2}{2}+u \right)^2-2u \left( X- \frac{Q_2}{4u} \right)^2, \\
&= \left[ X^2+ \frac{P_2}{2} +u + \sqrt{2u} \left( X- \frac{Q_2}{4u} \right) \right] \left[ X^2+ \frac{P_2}{2} +u - \sqrt{2u} \left( X- \frac{Q_2}{4u} \right) \right], \\
&= q_1(X) q_2(X).
\end{align*}

We have transformed the quartic polynomial in two quadratic polynomials.
Note that $u$ is any solution of \eqref{ap1}, which is a cubic equation with
independent term $-Q_2^2 \leq 0$.  If $Q_2 \neq 0$, equation \eqref{ap1} has always a positive real root, say, $u+$. We will work with this positive root and omit the + sign for simplicity. Define 
\begin{align*}
\Delta_1 &= 2u - 4 \left( \frac{P_2}{2}+u- \frac{Q_2}{2 \sqrt{2u}} \right), \\
\Delta_2 &= 2u - 4 \left( \frac{P_2}{2}+u+ \frac{Q_2}{2 \sqrt{2u}} \right).
\end{align*}
The roots of $q_1(X), q_2(X)$ are given by:
$$ X_1^{\pm} = \frac{1}{2} \left( - \sqrt{2u} \pm \sqrt{\Delta_1} \right), \quad X_2^{\pm} = \frac{1}{2} \left(  \sqrt{2u} \pm \sqrt{\Delta_2} \right). $$ 
Therefore, the four roots of equation \eqref{ap1} are the following:
\begin{align}
x_1^{\pm} &= \frac{1}{2} \left( - \sqrt{2u} \pm \sqrt{\Delta_1}- \frac{A}{2} \right), \\
x_2^\pm &= \frac{1}{2} \left( \sqrt{2u} \pm \sqrt{\Delta_2}- \frac{A}{2} \right).
\end{align}

\bibliographystyle{plain}

\end{document}